\newtheorem{theorem}{Theorem}[section]
\newtheorem{lemma}[theorem]{Lemma}
\theoremstyle{definition}
\newtheorem{definition}[theorem]{Definition}
\newtheorem{corollary}[theorem]{Corollary}
\newtheorem{proposition}[theorem]{Proposition}
\theoremstyle{remark}
\newtheorem{remark}[theorem]{Remark}
\numberwithin{equation}{section}
\begin{document}
%%%%%%%%%%%%%%%%%%%%%%%%%%%%%%%%%%%%%%%%%%%%%%%%%%%%%%%%%%%%%%%%%%%%%%%%%%%%

\title{The angle of an operator and range - kernel complementarity}

\author{Dimosthenis Drivaliaris}
\address{Department of Financial and Management Engineering\\
University of the Aegean\\
Kountourioti 45\\
82100 Chios\\
Greece}
\email{d.drivaliaris@fme.aegean.gr}
\author{Nikos Yannakakis}
\address{Department of Mathematics\\
School of Applied Mathematics and Natural Sciences\\
National Technical University of Athens\\
Iroon Polytexneiou 9\\
15780 Zografou\\
Greece}
\email{nyian@math.ntua.gr}

\subjclass[2010]{47A05; 47A12; 47A10; 47B44; 46B20; 46C50}

\date{}

\dedicatory{}

\commby{}

\begin{abstract}
We show that if the angle of a bounded linear operator on a Banach space, with closed range and closed sum of its range and kernel, is less than $\pi$, then its range and kernel are complementary. In finite dimensions and up to scalar multiples this simple geometric property characterizes operators with complementary range and kernel. Applying our result we get simple proofs of two known facts concerning eigenvalues lying in the boundary of the numerical range. For an operator on a Hilbert space we present a sufficient condition for range-kernel complementarity, involving the distance of the boundary of the numerical range from the origin. Finally, we discuss some properties of  operators whose spectrum does not intersect all rays emanating from the origin and show that in a Banach space which is uniformly convex and has uniformly convex dual such operators are surjective if and only if they are injective.
\end{abstract}
%%%%%%%%%%%%%%%%%%%%%%%%%%%%%%%%%%%%%%%%%%%%%%%%%%%%%%%%%%%%%%%%%%%%%%%%%%%%

\maketitle

\section{Introduction}
%%%%%%%%%%%%%%%%%%%%%%%%%%%%%%%%%%%%%%%%%%%%%%%%%%%%%%%%%%%%%%%%%%%%%%%%%%%%

Let $X$ be a complex Banach space and $A:X\rightarrow X$ be a bounded linear operator. We will denote the range of $A$ by $R(A)$ and the kernel of $A$ by $N(A)$. Our aim in this paper is to show that a simple geometric property of the operator $A$, namely that its angle (see (\ref{cosine}) and (\ref{angle}) in the following section) is less than $\pi$, implies that
\begin{equation}
\label{intro}
X=R(A)\oplus N(A),
\end{equation}
provided that $A$ has closed range and $R(A)+N(A)$ is closed. Note that the last hypothesis is unnecessary if $X$ is a Hilbert space (see Theorem \ref{hilbert}).
%%%%%%%%%%%%%%%%%%%%%%%%%%%%%%%%%%%%%%%%%%%%%%%%%%%%%%%%%%%%%%%%%%%%%%%%%%%%

An immediate consequence of the above is that bounded accretive operators either satisfy (\ref{intro}) or at least have ascent equal to 1 (see the following section for the definition of the ascent of an operator). This observation allows us to give alternative proofs of two known results, the first by N. Nirschl and H. Schneider \cite{nirschl}, \cite[Theorem 10.10]{bonsal} and the second by A. M. Sinclair \cite[Proposition 3]{sinclair}, concerning eigenvalues lying in the boundary of the numerical range of an operator.
%%%%%%%%%%%%%%%%%%%%%%%%%%%%%%%%%%%%%%%%%%%%%%%%%%%%%%%%%%%%%%%%%%%%%%%%%%%%

We also show that if $X$ is a strictly convex, finite dimensional Banach space and $A:X\rightarrow X$ is a linear operator, then (\ref{intro}) holds if and only if there exists $0\neq t\in\mathbb{C}$ such that the angle of $tA$ is less than $\pi$.
%%%%%%%%%%%%%%%%%%%%%%%%%%%%%%%%%%%%%%%%%%%%%%%%%%%%%%%%%%%%%%%%%%%%%%%%%%%%

Finally, we present two applications. The first is a sufficient condition, involving the distance of the boundary of the numerical range from the origin, for (\ref{intro}) to hold. The second exploits some properties of operators whose spectrum does not intersect all rays emanating from the origin. In particular we show that if $X$ and $X^*$ are uniformly convex, then such operators are surjective if and only if they are injective.
%%%%%%%%%%%%%%%%%%%%%%%%%%%%%%%%%%%%%%%%%%%%%%%%%%%%%%%%%%%%%%%%%%%%%%%%%%%%

\section{Preliminaries}
%%%%%%%%%%%%%%%%%%%%%%%%%%%%%%%%%%%%%%%%%%%%%%%%%%%%%%%%%%%%%%%%%%%%%%%%%%%%

In what follows, $X$ is a complex Banach space, $\|\cdot\|$ is its norm, $X^*$ is its dual and $\langle\cdot\,,\cdot\rangle$ is the duality product. Throughout we assume that $X$ is equipped with a semi-inner product $[\cdot\,,\cdot]$ compatible with its norm. If $X$ is a Hilbert space, then we will denote the inner product of $X$ by $\langle\cdot,\cdot\rangle$.
%%%%%%%%%%%%%%%%%%%%%%%%%%%%%%%%%%%%%%%%%%%%%%%%%%%%%%%%%%%%%%%%%%%%%%%%%%%%

By $J:X\rightarrow 2^{X^*}$ we denote the duality map of $X$, which is defined by
\begin{equation*}
J(x)=\left\{x^*\in X^*\,:\,\langle x^*,x\rangle=\| x\|^2\text{ and }\| x^*\|=\| x\|\right\},
\end{equation*}
for all $x\in X$. It is easy to see that, for every $x\in X$, $J(x)$ is a non-empty $w^\ast$-compact subset of $X^\ast$. Moreover, if $x, y\in X$, then, for some $x^\ast\in J(x)$,
\begin{equation*}
[y,x]=\langle x^\ast,y\rangle.
\end{equation*}
Note that this implies that if $J$ is single-valued, as is the case if $X^\ast$ is strictly convex, then the semi-inner product of $X$ is unique.
%%%%%%%%%%%%%%%%%%%%%%%%%%%%%%%%%%%%%%%%%%%%%%%%%%%%%%%%%%%%%%%%%%%%%%%%%%%%

Before we move on, we would like to discuss a continuity property of $[\cdot\,,\cdot]$, which we will need for the proof of our main result.
%%%%%%%%%%%%%%%%%%%%%%%%%%%%%%%%%%%%%%%%%%%%%%%%%%%%%%%%%%%%%%%%%%%%%%%%%%%%

Recall that if $Y_1$ and $Y_2$ are topological spaces and $F:Y_1\rightarrow 2^{Y_2}$ is a multifunction, then $F$ is called upper semicontinuous at $y\in Y_1$ if for every open subset $V$ of $Y_2$ containing $F(y)$, there exists an open neighborhood $U$ of $y$ such that $F(U)\subseteq V$ \cite[Definition 1.2.3 and Remark 1.2.4]{pap}.
%%%%%%%%%%%%%%%%%%%%%%%%%%%%%%%%%%%%%%%%%%%%%%%%%%%%%%%%%%%%%%%%%%%%%%%%%%%%

\begin{lemma}
\label{dualitymap}
The duality map $J$ from $X$ equipped with the norm topology into $X^\ast$ equipped with the $w^\ast$-topology is upper semicontinuous.
\end{lemma}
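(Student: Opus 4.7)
The plan is to argue by contradiction, using Banach--Alaoglu to extract a limit point and then show it lies in $J(x_0)$, contradicting that the limit should eventually lie inside a $w^\ast$-open set containing $J(x_0)$.

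Fix $x_0\in X$ and suppose, toward a contradiction, that $J$ is not upper semicontinuous at $x_0$. Then there is a $w^\ast$-open $V\subseteq X^\ast$ with $J(x_0)\subseteq V$ such that every norm-neighborhood of $x_0$ contains some point $x$ with $J(x)\not\subseteq V$. Indexing over the directed set of norm-neighborhoods of $x_0$ (ordered by reverse inclusion), I would select a net $(x_\lambda)$ with $x_\lambda\to x_0$ in norm, together with functionals $x_\lambda^\ast\in J(x_\lambda)\setminus V$. Since $\|x_\lambda^\ast\|=\|x_\lambda\|\to\|x_0\|$, the net $(x_\lambda^\ast)$ is eventually contained in some closed ball of $X^\ast$; by the Banach--Alaoglu theorem it has a $w^\ast$-convergent subnet, which (after relabeling) I may assume converges $w^\ast$ to some $x^\ast\in X^\ast$.

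The key step is to verify that $x^\ast\in J(x_0)$. First, $w^\ast$-lower semicontinuity of the norm gives
\begin{equation*}
\|x^\ast\|\le\liminf_\lambda\|x_\lambda^\ast\|=\|x_0\|.
\end{equation*}
Second, I would split
\begin{equation*}
\langle x_\lambda^\ast,x_\lambda\rangle-\langle x^\ast,x_0\rangle
=\langle x_\lambda^\ast,x_\lambda-x_0\rangle+\langle x_\lambda^\ast-x^\ast,x_0\rangle;
\end{equation*}
the first term is bounded by $\|x_\lambda^\ast\|\,\|x_\lambda-x_0\|\to 0$ and the second tends to $0$ by $w^\ast$-convergence at the point $x_0$. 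Since $\langle x_\lambda^\ast,x_\lambda\rangle=\|x_\lambda\|^2\to\|x_0\|^2$, this yields $\langle x^\ast,x_0\rangle=\|x_0\|^2$. Combined with $\|x^\ast\|\le\|x_0\|$, the Cauchy--Schwarz-type inequality $\|x_0\|^2=\langle x^\ast,x_0\rangle\le\|x^\ast\|\|x_0\|$ forces $\|x^\ast\|=\|x_0\|$, so $x^\ast\in J(x_0)\subseteq V$.

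To close the contradiction, since $V$ is $w^\ast$-open and $x_\lambda^\ast\to x^\ast$ in the $w^\ast$-topology, we would have $x_\lambda^\ast\in V$ eventually, contradicting the choice $x_\lambda^\ast\notin V$. The main subtlety is purely the bookkeeping with nets rather than sequences (since $X$ is not assumed separable, the $w^\ast$-topology on bounded sets need not be metrizable); the analytic content is the straightforward verification that the $w^\ast$-limit lies in $J(x_0)$, which I expect to be routine once the norm-convergence of $x_\lambda$ is combined with the boundedness of $(x_\lambda^\ast)$.
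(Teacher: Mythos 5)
Your proof is correct. It differs from the paper's in presentation rather than in substance: the paper factors the argument through a cited general result on multifunctions (\cite[Proposition 1.2.23]{pap}: a locally compact multifunction with closed graph is upper semicontinuous), declaring the closed-graph property of $J$ ``easy to see'' and obtaining local compactness from the boundedness of $J$ on bounded sets. You instead run the contradiction argument directly with nets, which amounts to proving that cited proposition in this special case. The two ingredients match up exactly: your Banach--Alaoglu extraction of a $w^\ast$-convergent subnet is the local compactness, and your verification that the limit $x^\ast$ lies in $J(x_0)$ --- via $w^\ast$-lower semicontinuity of the dual norm and the splitting $\langle x_\lambda^\ast,x_\lambda\rangle-\langle x^\ast,x_0\rangle=\langle x_\lambda^\ast,x_\lambda-x_0\rangle+\langle x_\lambda^\ast-x^\ast,x_0\rangle$ --- is precisely the closed-graph property the paper leaves unproved. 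What your version buys is self-containment (no appeal to the multivalued-analysis literature) and an explicit record of where norm convergence of $x_\lambda$ and boundedness of $x_\lambda^\ast$ are actually used; what the paper's version buys is brevity. Your handling of the net bookkeeping (indexing by neighborhoods ordered by reverse inclusion, passing to a tail before applying Banach--Alaoglu) is the right way to avoid the non-metrizability issue you flag. The only cosmetic remark: since the space is complex, the final inequality should read $\|x_0\|^2=\operatorname{Re}\langle x^\ast,x_0\rangle\le|\langle x^\ast,x_0\rangle|\le\|x^\ast\|\,\|x_0\|$, which changes nothing.
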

%%%%%%%%%%%%%%%%%%%%%%%%%%%%%%%%%%%%%%%%%%%%%%%%%%%%%%%%%%%%%%%%%%%%%%%%%%%%

\begin{proof}
It is easy to see that the graph of $J$ from $X$ equipped with the norm topology into $X^\ast$ equipped with the $w^\ast$-topology is closed. That together with the fact that $J$ is locally compact (i.e.\ that, for each $x\in X$, there exists an open neighborhood $U$ of $x$ such that $\overline{J(U)}^{w^\ast}$ is $w^\ast$-compact) is equivalent, by \cite[Proposition 1.2.23]{pap}, to $J$ being upper semicontinuous.
\end{proof}
%%%%%%%%%%%%%%%%%%%%%%%%%%%%%%%%%%%%%%%%%%%%%%%%%%%%%%%%%%%%%%%%%%%%%%%%%%%%

\begin{remark}
Note that in \cite[Theorem 4.3 and Definition 4.2]{cudia} it was shown that $J$ from the unit sphere of $X$ equipped with the norm topology into the unit sphere of $X^\ast$ equipped with the $w^\ast$-topology has closed graph.
\end{remark}
%%%%%%%%%%%%%%%%%%%%%%%%%%%%%%%%%%%%%%%%%%%%%%%%%%%%%%%%%%%%%%%%%%%%%%%%%%%%

Using Lemma \ref{dualitymap} we can get the following continuity result for $[\cdot\,,\cdot]$.
%%%%%%%%%%%%%%%%%%%%%%%%%%%%%%%%%%%%%%%%%%%%%%%%%%%%%%%%%%%%%%%%%%%%%%%%%%%%

\begin{proposition}
\label{semi}
Let $(x_n)$ be a sequence in $X$ with $\|x_n\|=1$, for all $n\in\mathbb N$, and $x\in X$ such that $x_n\rightarrow x$. Then there exists a subsequence $(x_{n_k})$ of $(x_n)$ such that
\begin{equation*}
\lim_{k\rightarrow\infty} [x,x_{n_k}]=1.
\end{equation*}
\end{proposition}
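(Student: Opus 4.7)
The plan is to reduce the statement to the representation of the semi-inner product via the duality map given in the preliminaries: for each $n$ there exists some $x_n^\ast \in J(x_n)$ with $[x, x_n] = \langle x_n^\ast, x \rangle$, and by the defining properties of $J$ one has $\|x_n^\ast\| = \|x_n\| = 1$ together with $\langle x_n^\ast, x_n \rangle = \|x_n\|^2 = 1$.

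The short route is then a one-line computation. Subtracting $\langle x_n^\ast, x_n \rangle = 1$ gives
\begin{equation*}
[x, x_n] - 1 = \langle x_n^\ast, x \rangle - \langle x_n^\ast, x_n \rangle = \langle x_n^\ast, x - x_n \rangle,
\end{equation*}
whose modulus is at most $\|x_n^\ast\|\,\|x - x_n\| = \|x - x_n\| \to 0$. Hence $[x, x_n] \to 1$ along the \emph{full} sequence, and the subsequence demanded by the statement can be taken to be $(x_n)$ itself.

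Alternatively, to bring Lemma \ref{dualitymap} genuinely into play as the placement of that lemma suggests, I would fix $\varepsilon > 0$ and consider the $w^\ast$-open set $V_\varepsilon = \{y^\ast \in X^\ast : |\langle y^\ast, x \rangle - 1| < \varepsilon\}$. Since $x_n \to x$ forces $\|x\| = 1$, every $x^\ast \in J(x)$ satisfies $\langle x^\ast, x \rangle = \|x\|^2 = 1$, so $J(x) \subseteq V_\varepsilon$. Upper semicontinuity of $J$ at $x$ then produces a norm neighborhood $U$ of $x$ with $J(U) \subseteq V_\varepsilon$; because $x_n \in U$ eventually, we get $x_n^\ast \in V_\varepsilon$ and hence $|[x, x_n] - 1| < \varepsilon$. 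The only mild subtlety in this second approach is that $J(x)$ need not be a singleton, which is handled by the observation that \emph{every} functional in $J(x)$ pairs with $x$ to the common value $1$, so that a single $w^\ast$-open set does cover $J(x)$. Beyond this I do not foresee a genuine obstacle, and in either presentation the conclusion is in fact stronger than stated: no thinning to a subsequence is required.
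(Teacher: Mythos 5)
Your proposal is correct, and your first argument is genuinely different from --- and more elementary than --- the paper's proof. The paper proceeds by contradiction: it uses the $w^\ast$-compactness of $J(x)$ together with the upper semicontinuity of $J$ (Lemma \ref{dualitymap}) to extract a subsequence $(x_{n_k}^\ast)$ converging $w^\ast$ to some $x^\ast\in J(x)$, and then evaluates at $x$. Your one-line computation
$[x,x_n]-1=\langle x_n^\ast, x-x_n\rangle$, bounded in modulus by $\|x_n^\ast\|\,\|x-x_n\|=\|x-x_n\|$, uses only the two defining identities $\langle x_n^\ast,x_n\rangle=1$ and $\|x_n^\ast\|=1$ and the linearity of $x_n^\ast$; it bypasses Lemma \ref{dualitymap} entirely and yields the stronger conclusion that the \emph{full} sequence converges (equivalently, one can write $|[x,x_n]-[x_n,x_n]|=|[x-x_n,x_n]|\le\|x-x_n\|$ directly from the semi-inner product axioms). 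Your second argument is a streamlined version of the paper's: you observe that the functional $y^\ast\mapsto\langle y^\ast,x\rangle$ is identically $1$ on $J(x)$ (since $\|x\|=1$), so a single $w^\ast$-open set $V_\varepsilon$ covers $J(x)$ and no subsequence extraction is needed. What the paper's heavier argument buys is the genuinely stronger structural fact that $(x_n^\ast)$ has a $w^\ast$-cluster point in $J(x)$, but that extra information is not used anywhere in the paper, so for the stated proposition your direct estimate is preferable.
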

%%%%%%%%%%%%%%%%%%%%%%%%%%%%%%%%%%%%%%%%%%%%%%%%%%%%%%%%%%%%%%%%%%%%%%%%%%%%

\begin{proof}
By what we said after the definition of the duality map, for each $n\in \mathbb N$, there exists $x_n^\ast\in J(x_n)$ such that
\begin{equation*}
[x,x_n]=\langle x_n^\ast,x\rangle.
\end{equation*}

We will prove that $(x_n^\ast)$ has a subsequence converging to some $x^\ast\in J(x)$ with respect to the $w^\ast$-topology.

Assume the contrary, i.e.\ that, for every $x^\ast\in J(x)$, there exists a $w^\ast$-open neighborhood $V(x^\ast)$ of $x^\ast$ and $N_{x^\ast}\in \mathbb N$ such that $x_n^\ast\notin V(x^\ast)$, for all $n\geq N_{x^\ast}$.

Obviously
\begin{equation*}
\bigcup_{x^\ast\in J(x)} V(x^\ast)
\end{equation*}
is an $w^\ast$-open cover of $J(x)$ and hence, since $J(x)$ is $w^\ast$-compact, there exists a finite subcover
\begin{equation*}
V=\bigcup_{i=1}^KV(x^\ast_i)
\end{equation*}
of $J(x)$. Obviously, if $N_1=\max_{1\leq i\leq K}N_{x^\ast_i}$, then
\begin{equation}
\label{lem1}
x_n^\ast\notin V,\text{ for all }n\geq N_1.
\end{equation}

Since, by Lemma \ref{dualitymap}, $J$ is upper semicontinuous from $X$ equipped with the norm topology into $X^\ast$ equipped with the $w^\ast$-topology, there exists an open neighborhood $U$ of $x$ such that
\begin{equation*}
J(U)\subseteq V.
\end{equation*}
But, since $x_n\rightarrow x$, there exists $N_2\in \mathbb N$ such that $x_n\in U$, for all $n\geq N_2$. So 
\begin{equation}
\label{lem2}
x_n^\ast\in J(x_n)\subseteq V,\text{ for all }n\geq N_2.
\end{equation}

Obviously combining (\ref{lem1}) and (\ref{lem2}) we get a contradiction and thus there exists a subsequence $(x_{n_k}^\ast)$ of $(x^\ast_n)$ such that
\begin{equation*}
x_{n_k}^\ast\stackrel{w^\ast}{\rightarrow} x^\ast\in J(x).
\end{equation*}
So
\begin{equation*}
\lim_{k\rightarrow \infty}[x,x_{n_k}]=\lim_{k\rightarrow \infty}\langle x_{n_k}^\ast,x\rangle=\langle x^\ast, x\rangle=1.
\end{equation*}
\end{proof}
%%%%%%%%%%%%%%%%%%%%%%%%%%%%%%%%%%%%%%%%%%%%%%%%%%%%%%%%%%%%%%%%%%%%%%%%%%%%

\begin{remark}
The proof of Proposition \ref{semi} is based on \cite[Proposition I.2.19]{pap}.
\end{remark}
%%%%%%%%%%%%%%%%%%%%%%%%%%%%%%%%%%%%%%%%%%%%%%%%%%%%%%%%%%%%%%%%%%%%%%%%%%%%

Let $A:X\rightarrow X$ be a bounded linear operator. The ascent $\alpha(A)$ of $A$ is the smallest positive integer $k$ for which $N(A^k)=N(A^{k+1})$. If no such integer exists, then $\alpha(A)=\infty$. The descent $\delta(A)$ of $A$ is the smallest positive integer $k$ for which $R(A^k)=R(A^{k+1})$. If no such integer exists, then $\delta(A)=\infty$. Recall (see \cite[\textsection2.2]{abramovich}, \cite[pp.\ 26--29]{marek1}, \cite[Chapter 13]{marek}, \cite[V.6]{taylor}) that the following are equivalent:
\begin{itemize}
\item[(i)] $ X=R(A^k)\oplus N(A^k),$
\item[(ii)] $ \alpha(A), \delta(A)\leq k,$
\item[(iii)] There exists a bounded linear operator $A^{d}:X\rightarrow X$ (called the Drazin inverse of $A$), satisfying
\begin{equation*}
A^{k+1}A^{d}=A^k, \,A^{d}AA^{d}=A^{d}\text{ and }AA^{d}=A^{d}A.
\end{equation*}
\end{itemize}
If one of (i), (ii), (iii) holds, then $R(A^k)$ is closed and
\begin{equation*}
\alpha(A)=\delta(A).
\end{equation*}
In that case we call the common value of $\alpha(A)$ and $\delta(A)$ the Drazin index of $A$ and denote it by $i(A)$. We have that 
\begin{equation*}
R(A^k)\cap N(A^k)=\{0\}
\end{equation*}
if and only if
\begin{equation*}
\alpha(A)\leq k.
\end{equation*}
Finally, note that, for $k=1$, $A^d$ is a commuting generalized inverse of $A$ (the group inverse $A^{\#}$).
%%%%%%%%%%%%%%%%%%%%%%%%%%%%%%%%%%%%%%%%%%%%%%%%%%%%%%%%%%%%%%%%%%%%%%%%%%%%

The cosine of a linear operator $A:X\rightarrow X$ with respect to $[\cdot, \cdot]$ is defined by
\begin{equation}
\label{cosine}
\cos A=\inf\left\lbrace\frac{Re[Ax, x]}{\|Ax\|\,\|x\|}\,:\,x\notin N(A)\right\rbrace.
\end{equation}
This concept was introduced by K. Gustafson in \cite{gus1}. Using (\ref{cosine}) one can define the angle
$\phi(A)$ of $A$ by
\begin{equation}
\label{angle}
\phi(A)=\arccos(\cos A).
\end{equation}
The angle $\phi(A)$ of $A$ has an obvious geometric interpretation; it measures the maximum (real) turning effect of $A$.
%%%%%%%%%%%%%%%%%%%%%%%%%%%%%%%%%%%%%%%%%%%%%%%%%%%%%%%%%%%%%%%%%%%%%%%%%%%%

Let $A:X\rightarrow X$ be a bounded linear operator. The numerical range $W(A)$ of $A$ corresponding to $[\cdot,\cdot]$ is defined by
\begin{equation*}
W(A)=\left\{[Ax,x]\,:\,\|x\|=1\right\},
\end{equation*}
whereas its spatial numerical range is defined by
\begin{equation*}
V(A)=\left\{\langle x^\ast, Ax\rangle\,:\,x^\ast\in J(x),\|x\|=1\right\}.
\end{equation*}
Recall that (see \cite[Theorems 9.4 and 9.8]{bonsal})
\begin{equation*}
\partial\,\overline{co}\,W(A)=\partial\,\overline{co}\,V(A),
\end{equation*}
where by $\partial\,\overline{co}\,S$ we denote the boundary of the closed convex hull of a set $S$.
%%%%%%%%%%%%%%%%%%%%%%%%%%%%%%%%%%%%%%%%%%%%%%%%%%%%%%%%%%%%%%%%%%%%%%%%%%%%

If $A, B:X\rightarrow X$ are bounded linear operators with $R(A)$ and $R(B)$ closed, then $R(AB)$ is closed if and only if $N(A)+R(B)$ is closed (see \cite[Corollary 1]{nikaido}). In particular, if $A:X\rightarrow X$ is a bounded linear operator with $R(A)$ closed, then $R(A^2)$ is closed if and only if $N(A)+R(A)$ is closed.
%%%%%%%%%%%%%%%%%%%%%%%%%%%%%%%%%%%%%%%%%%%%%%%%%%%%%%%%%%%%%%%%%%%%%%%%%%%%

A linear operator $A:X\rightarrow X$ is called accretive if there exists some semi-inner product $[\cdot,\cdot]$ such that
\begin{equation*}
Re[Ax, x]\geq 0,\text{ for all }x\in X.
\end{equation*}
%%%%%%%%%%%%%%%%%%%%%%%%%%%%%%%%%%%%%%%%%%%%%%%%%%%%%%%%%%%%%%%%%%%%%%%%%%%%

Finally, recall that if $M, N$ are closed subspaces of $X$ such that $M\nsubseteq N$, then
\begin{equation*}
\gamma(M,N)=\inf_{x\in M, x\notin N}\frac{\mathrm{dist}(x,N)}{\mathrm{dist}(x,M\cap N)}
\end{equation*}
(see \cite[p. 219]{Kato}). We have that $\gamma(M,N)>0$ if and only if $M+N$ is closed \cite[Theorem 4.2, p. 219]{Kato}.
%%%%%%%%%%%%%%%%%%%%%%%%%%%%%%%%%%%%%%%%%%%%%%%%%%%%%%%%%%%%%%%%%%%%%%%%%%%%

\section{Main result}
%%%%%%%%%%%%%%%%%%%%%%%%%%%%%%%%%%%%%%%%%%%%%%%%%%%%%%%%%%%%%%%%%%%%%%%%%%%%

We begin with an important property of the angle $\phi(A)$.
%%%%%%%%%%%%%%%%%%%%%%%%%%%%%%%%%%%%%%%%%%%%%%%%%%%%%%%%%%%%%%%%%%%%%%%%%%%%

\begin{proposition}
\label{zero}
Let $A:X\rightarrow X$ be a linear operator. If
\begin{equation*}
R(A)\cap N(A)\neq\left\{0\right\},
\end{equation*}
then $\phi(A)=\pi$.
\end{proposition}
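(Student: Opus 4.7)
The plan is to exhibit a sequence $(x_n)$ with $x_n\notin N(A)$ along which the quotient in the definition of $\cos A$ tends to $-1$; since that quotient is bounded above by $1$ in absolute value, this forces $\cos A=-1$ and hence $\phi(A)=\arccos(-1)=\pi$.

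First I would fix a nonzero $y\in R(A)\cap N(A)$ and, by scaling, assume $\|y\|=1$. Since $y\in R(A)$ there is some $z\in X$ with $Az=y$, and since $y\in N(A)$ we have $Ay=0$. For a sequence of positive reals $t_n\downarrow 0$ I set $x_n=y-t_n z$. The payoff of this choice is that the $y$-term is annihilated while the $z$-term is sent to $y$: explicitly $Ax_n=-t_n y$, so $x_n\notin N(A)$ and $\|Ax_n\|=t_n$. Moreover $\|x_n\|\to\|y\|=1$, so $x_n$ stays comfortably away from zero.

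Using linearity of the semi-inner product in its first slot and positive homogeneity in its second slot (which follows from its representation $[y,x]=\langle x^{\ast},y\rangle$ with $x^{\ast}\in J(x)$), one computes
\[
\frac{\mathrm{Re}\,[Ax_n,x_n]}{\|Ax_n\|\,\|x_n\|}=\frac{-t_n\,\mathrm{Re}\,[y,x_n]}{t_n\,\|x_n\|}=-\mathrm{Re}\,[y,u_n],
\]
where $u_n=x_n/\|x_n\|$. Now $\|u_n\|=1$ and $u_n\to y$ in norm, so Proposition \ref{semi} yields a subsequence with $[y,u_{n_k}]\to 1$; along that subsequence the displayed quotient converges to $-1$, giving $\cos A\le -1$ and therefore $\cos A=-1$.

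The one nontrivial step is the limit $[y,u_{n_k}]\to 1$. The semi-inner product is not jointly continuous in its second argument for a general Banach space, which is precisely why Proposition \ref{semi} (and, behind it, the upper semicontinuity of the duality map established in Lemma \ref{dualitymap}) is needed. Everything else is a direct algebraic manipulation of the perturbation $x_n=y-t_n z$, which is tailor-made to turn the nontrivial intersection $R(A)\cap N(A)$ into an optimal witness for the infimum defining $\cos A$.
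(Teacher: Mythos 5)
Your proof is correct and follows essentially the same route as the paper: perturb a nonzero element of $R(A)\cap N(A)$ by a vanishing multiple of a preimage so that $Ax_n$ becomes antiparallel to $x_n$, then invoke Proposition \ref{semi} to control $[y,u_{n_k}]$. The only cosmetic difference is that the paper chooses $t_n$ so that $\|x_n\|=1$ exactly (making Proposition \ref{semi} apply directly), whereas you normalize afterwards and therefore also use positive homogeneity of $[\cdot\,,\cdot]$ in its second slot, which holds for the Giles-type semi-inner products used here.
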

%%%%%%%%%%%%%%%%%%%%%%%%%%%%%%%%%%%%%%%%%%%%%%%%%%%%%%%%%%%%%%%%%%%%%%%%%%%%

\begin{proof}
The hypothesis
\begin{equation*}
R(A)\cap N(A)\neq\left\{0\right\}
\end{equation*}
implies that there exists $z\in X$ with $\|z\|=1$, $Az\neq0$ and $Az\in N(A)$.

It is easy to see that, for each $n\in\mathbb N$ with $n\geq 2$, there exists $t_n>0$ such that for
\begin{equation*}
x_n=\frac{1}{n}\, z-t_n\,\frac{Az}{\|Az\|}
\end{equation*}
we have that $\|x_n\|=1$. Since $\|x_n\|=1$, for all $n\geq 2$, $\displaystyle{\lim_{n\rightarrow \infty}}t_n=1$, and so
\begin{equation*}
\lim_{n\rightarrow \infty}x_n=-\frac{Az}{\|Az\|}\,.
\end{equation*}
Thus, by Proposition \ref{semi}, we can find a subsequence $(x_{n_k})$ of $(x_{n})$ such that
\begin{equation*}
\lim_{k\rightarrow \infty}\frac{[Az, x_{n_k}]}{\|Az\|}=-1.
\end{equation*}
Since $Az\in N(A)$,
\begin{equation*}
\frac{Re[Ax_n, x_n]}{\|Ax_n\|}=\frac{Re\left[Az,x_n\right]}{\|Az\|},\text{ for all }n\geq 2,
\end{equation*}
and so
\begin{equation*}
\lim_{k\rightarrow \infty}\frac{Re[Ax_{n_k}, x_{n_k}]}{\|Ax_{n_k}\|}=-1,
\end{equation*}
which implies that $\phi(A)=\pi$.
\end{proof}
%%%%%%%%%%%%%%%%%%%%%%%%%%%%%%%%%%%%%%%%%%%%%%%%%%%%%%%%%%%%%%%%%%%%%%%%%%%%

\begin{remark}
\label{rem}
(i) It is easy to see that the above remains true for a not everywhere defined linear operator.\\
(ii) The converse of Proposition \ref{zero} does not hold. To see that let $A=-I_X$. Then $\phi(A)=\pi$ and $R(A)\cap N(A)=\left\{0\right\}$.\\
(iii) Proposition \ref{zero} tells us that $\phi(A)<\pi$ implies that $\alpha (A)\leq 1$.\\
(iv) By (iii)  $\phi(A^k)<\pi$ implies that $\alpha (A^k)\leq 1$ and so $\alpha (A)\leq k$. Note that $\phi(A^k)$ and $\phi(A)$ are not related.\\
(v) If $X$ is finite dimensional, then by Proposition \ref{zero} we get  that $\phi(A)<\pi$ implies that $X=R(A)\oplus N(A)$ and so $i(A)\leq 1$.\\
(vi) If $\varphi(tA)<\pi$, for some $0\neq t\in\mathbb{C}$, then $R(tA)\cap N(tA)=\left\{0\right\}$. Thus, since $R(tA)=R(A)$ and $N(tA)=N(A)$, we get that $R(A)\cap N(A)=\left\{0\right\}$.
\end{remark}
%%%%%%%%%%%%%%%%%%%%%%%%%%%%%%%%%%%%%%%%%%%%%%%%%%%%%%%%%%%%%%%%%%%%%%%%%%%%

Using Proposition \ref{zero} we can get the following result about accretive operators.
%%%%%%%%%%%%%%%%%%%%%%%%%%%%%%%%%%%%%%%%%%%%%%%%%%%%%%%%%%%%%%%%%%%%%%%%%%%%

\begin{corollary}
\label{accretive}
If $A$ is an accretive linear operator, then $\alpha(A)\leq1$.
\end{corollary}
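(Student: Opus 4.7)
The plan is to derive this as a direct contrapositive of Proposition \ref{zero}, after observing that accretivity forces the angle to be acute.

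First I would unpack the definition of accretive: there is a semi-inner product $[\cdot,\cdot]$ compatible with the norm for which $\operatorname{Re}[Ax,x]\ge 0$ for every $x\in X$. In particular, for any $x\notin N(A)$, both $\|Ax\|$ and $\|x\|$ are strictly positive, so
\begin{equation*}
\frac{\operatorname{Re}[Ax,x]}{\|Ax\|\,\|x\|}\ge 0.
\end{equation*}
Taking the infimum over $x\notin N(A)$ gives $\cos A\ge 0$ (with respect to the same semi-inner product witnessing accretivity), and hence $\phi(A)=\arccos(\cos A)\le \pi/2 < \pi$.

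Next I would invoke Proposition \ref{zero} in its contrapositive form: $\phi(A)<\pi$ forces $R(A)\cap N(A)=\{0\}$. Combined with the characterization recorded in the Preliminaries, that $R(A^k)\cap N(A^k)=\{0\}$ is equivalent to $\alpha(A)\le k$, applied with $k=1$, this gives $\alpha(A)\le 1$, which is the desired conclusion.

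There is essentially no obstacle here; the only point that requires a tiny bit of care is making sure that the semi-inner product used to compute $\cos A$ (and therefore $\phi(A)$) in Proposition \ref{zero} is the same one witnessing accretivity. Since Proposition \ref{zero} is stated relative to an arbitrary compatible semi-inner product (its proof only uses Proposition \ref{semi}, which works for any such choice), we may simply take the semi-inner product provided by the definition of accretivity, and the argument goes through.
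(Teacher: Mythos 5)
Your proposal is correct and follows exactly the paper's own route: accretivity gives $\phi(A)\le\pi/2<\pi$, and the contrapositive of Proposition \ref{zero} together with the equivalence $R(A)\cap N(A)=\{0\}\Leftrightarrow\alpha(A)\le 1$ from the Preliminaries yields the claim. Your remark about using the same semi-inner product that witnesses accretivity is a sensible point of care that the paper leaves implicit.
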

%%%%%%%%%%%%%%%%%%%%%%%%%%%%%%%%%%%%%%%%%%%%%%%%%%%%%%%%%%%%%%%%%%%%%%%%%%%%

\begin{proof}
Obviously $\phi(A)\leq\frac{\pi}{2}<\pi$ and so, by Proposition \ref{zero}, $\alpha(A)\leq1$.
\end{proof}
%%%%%%%%%%%%%%%%%%%%%%%%%%%%%%%%%%%%%%%%%%%%%%%%%%%%%%%%%%%%%%%%%%%%%%%%%%%%

We can now prove our main result.
%%%%%%%%%%%%%%%%%%%%%%%%%%%%%%%%%%%%%%%%%%%%%%%%%%%%%%%%%%%%%%%%%%%%%%%%%%%%

\begin{theorem}
\label{theorem}
Let $A:X\rightarrow X$ be a bounded linear operator with closed range such that $R(A)+N(A)$ is closed. If $\phi(A)<\pi$, then
\begin{equation*}
X=R(A)\oplus N(A).
\end{equation*}
\end{theorem}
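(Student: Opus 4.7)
\emph{First reduction.} Proposition~\ref{zero} (cf.\ Remark~\ref{rem}(iii)) converts the hypothesis $\phi(A)<\pi$ into $R(A)\cap N(A)=\{0\}$, that is, $\alpha(A)\le 1$. Since $R(A)$ and $R(A)+N(A)$ are closed, the Nikaido-type result recalled in the preliminaries makes $R(A^2)$ closed, and then $A|_{R(A)}\colon R(A)\to R(A^2)$ is, by the open mapping theorem, a linear homeomorphism of Banach spaces. Under $\alpha(A)\le 1$, the conclusion $X=R(A)\oplus N(A)$ is equivalent to $\delta(A)\le 1$, i.e.\ to $R(A)=R(A^2)$, which is precisely the surjectivity of $A|_{R(A)}$ on $R(A)$.

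\emph{Reduction to the injective case.} Set $B:=A|_{R(A)}$ and $Y:=R(A)$. Then $B$ is injective, has closed range, satisfies $\phi(B)\le\phi(A)<\pi$ (the infimum in (\ref{cosine}) is taken over fewer vectors), and $R(B)+N(B)=R(B)$ is automatically closed. Hence the theorem reduces to showing that \emph{an injective bounded operator $B\colon Y\to Y$ with closed range and $\phi(B)<\pi$ is surjective.} Arguing by contradiction, if $R(B)\subsetneq Y$, then Hahn-Banach yields $\eta\in Y^*\setminus\{0\}$ with $\eta|_{R(B)}=0$, so $\eta\in N(B^*)$; since $B$ is bounded below, the closed-range theorem gives $R(B^*)=N(B)^\perp=Y^*$, and therefore $\eta$ also lies in $R(B^*)$. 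In particular $R(B^*)\cap N(B^*)\ne\{0\}$.

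\emph{The main obstacle.} It remains to turn this dual obstruction into a contradiction with $\phi(B)<\pi$: concretely, one must produce a sequence $(y_n)\subset Y$ with $\|y_n\|=1$, $By_n\neq 0$, and $\operatorname{Re}[By_n,y_n]/(\|By_n\|\,\|y_n\|)\to -1$. The natural template is the construction in Proposition~\ref{zero}: take $y_n=(1/n)z-t_n w$ with $t_n$ chosen so that $\|y_n\|=1$, where $z$ and $w$ are picked so that $By_n$ is trapped in a fixed direction while $y_n\to -w$; Proposition~\ref{semi} together with the upper semicontinuity of $J$ (Lemma~\ref{dualitymap}) then drives the cosine ratio to $-1$. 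The essential difficulty is that in Proposition~\ref{zero} the ``bad'' unit vector $w$ was available as an actual element of $R(A)\cap N(A)\setminus\{0\}$, whereas here the analogous object lives only in the dual, as the functional $\eta$. One cannot simply invoke Proposition~\ref{zero} on $B^*$ either, since the Hilbert-space identity $\cos B^*=\cos B$ has no direct Banach-space counterpart. Realising $\eta$ geometrically inside $Y$ as a unit vector $w$ whose duality-map partner also norms the appropriate direction of $Bz$ is, in my view, the technical heart of the proof.
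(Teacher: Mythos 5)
Your reductions are exactly those of the paper: Proposition \ref{zero} gives $\alpha(A)\le 1$, Nikaido's criterion gives closedness of $R(A^2)$, and everything comes down to showing that the injective operator $B=A|_{R(A)}$ on the Banach space $Y=R(A)$, which has closed range and satisfies $\phi(B)\le\phi(A)<\pi$, is surjective. Up to that point the argument is correct (and your monotonicity $\phi(B)\le\phi(A)$, because the infimum defining $\cos B$ runs over fewer vectors, is the right one; the paper's ``$\phi(A|_{R(A)})\ge\phi(A)$'' is evidently a slip).

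However, the proposal stops at exactly the step that carries the content of the theorem, and the route you sketch for that step does not work. The paper closes the injective case not by exhibiting a sequence that drives the cosine ratio to $-1$, but by a surjectivity criterion: from $\phi(B)<\pi$ one gets $\delta>0$ with $Re[Bx,x]+\|Bx\|\,\|x\|\ge\delta\|Bx\|\,\|x\|$ for all $x$, and since $B$ is bounded below this yields a constant $C>0$ with $Re[Bx,x]+\|Bx\|\,\|x\|\ge C\|x\|^2$; the generalized Lax--Milgram theorem of \cite[Theorem 2.14]{drivyann} (the Banach-space version of Saint-Raymond's result) then gives $R(B)=Y$ directly. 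Your alternative --- producing $0\ne\eta\in R(B^\ast)\cap N(B^\ast)$ and trying to ``realise $\eta$ geometrically inside $Y$'' --- is, as you yourself observe, not something you can carry out: applying Proposition \ref{zero} to $B^\ast$ would only control the angle of $B^\ast$ with respect to a semi-inner product on $Y^\ast$, which bears no relation to $\phi(B)$ in a general Banach space, and there is no mechanism for converting the dual functional $\eta$ into a unit vector of $Y$ witnessing $\phi(B)=\pi$. So the heart of the proof is missing; the ingredient you need is the coercivity-type surjectivity theorem, not a refinement of the construction in Proposition \ref{zero}.
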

%%%%%%%%%%%%%%%%%%%%%%%%%%%%%%%%%%%%%%%%%%%%%%%%%%%%%%%%%%%%%%%%%%%%%%%%%%%%

\begin{proof}
Assume first that $N(A)=\left\{0\right\}$. Then $\phi(A)<\pi$ implies that there exists $\delta>0$ such that
\begin{equation*}
\frac{Re[ Ax, x]}{\|Ax\|\,\|x\|}\geq-1+\delta, \text{ for all }x\neq 0,
\end{equation*}
and so
\begin{equation}
\label{eq1}
Re[Ax, x]+\|Ax\|\,\|x\|\geq \delta\|Ax\|\,\|x\|, \text{ for all }x\in X.
\end{equation}
On the other hand, since $R(A)$ is closed, there exists $c>0$ such that
\begin{equation}
\label{eq1a}
\|Ax\|\geq c\|x\|, \text{ for all }x\in X.
\end{equation}
By (\ref{eq1}) and (\ref{eq1a}) we get that there exists $C>0$ such that
\begin{equation*}
Re[Ax, x]+\|Ax\|\|x\|\geq C\|x\|^2, \text{ for all }x\in X.
\end{equation*}
So we may apply \cite[Theorem 2.14]{drivyann} and get that $R(A)=X$.

For the general case, first note that, by Proposition \ref{zero}, $\phi(A)<\pi$ implies that $\alpha(A)\leq1$. Assume now that $\delta(A)>1$ and hence $R(A^2)\subsetneqq R(A)$. Then
\begin{equation*}
A|_{R(A)}:R(A)\rightarrow R(A)
\end{equation*}
is 1-1 but not onto. Moreover, by what we said in the Preliminaries, our assumption that $R(A)+N(A)$ is closed implies that
\begin{equation*}
R(A|_{R(A)})=R(A^2)
\end{equation*}
is closed. Hence, by the first part of the proof, we get that
\begin{equation*}
\phi(A|_{R(A)})=\pi.
\end{equation*}
Since $\phi(A|_{R(A)})\geq \phi(A)$, we get a contradiction, and so $\delta(A)\leq1$. Therefore
\begin{equation*}
X=R(A)\oplus N(A).
\end{equation*}
\end{proof}
%%%%%%%%%%%%%%%%%%%%%%%%%%%%%%%%%%%%%%%%%%%%%%%%%%%%%%%%%%%%%%%%%%%%%%%%%%%%

\begin{remark}
(i) \cite[Theorem 2.14]{drivyann} used in the above proof is the Banach space version of a result by J. Saint-Raymond in \cite{SaintRaymond} where some generalized versions of the Lax-Milgram theorem were proved, answering a question posed by B. Ricceri in \cite{Ricceri}.\\
(ii) The result of Theorem \ref{theorem} is not true if $A$ does not have closed range. To see that let $A:l^2(\mathbb N)\rightarrow l^2(\mathbb N)$ with $A((x_n))=(\frac{1}{n}\,x_n)$. Then $A$ does not have closed range, $\phi(A)<\pi$ and $R(A)+N(A)\neq X$.\\
(iii) What we said in Remark \ref{rem}(ii) shows that $X=R(A)\oplus N(A)$ does not imply that $\phi(A)<\pi$.\\
(v) Theorem \ref{theorem} tells us that if $A$ has closed range and $R(A)+N(A)$ is closed, then $\phi(A)<\pi$ implies that $i(A)\leq 1$.\\
(vi) By (v) if $R(A^k)$ and $R(A^k)+N(A^k)$ are closed, then $\phi(A^k)<\pi$ implies that $i(A^k)\leq 1$ and so $i(A)\leq k$.\\
(vii) If $A$ is a bounded linear operator with closed range, $R(A)+N(A)$ is closed and $\varphi(tA)<\pi$, for some $0\neq t\in\mathbb{C}$, then $X=R(A)\oplus N(A)$.
\end{remark}
%%%%%%%%%%%%%%%%%%%%%%%%%%%%%%%%%%%%%%%%%%%%%%%%%%%%%%%%%%%%%%%%%%%%%%%%%%%%

Returning to accretive operators we have the next corollary.
%%%%%%%%%%%%%%%%%%%%%%%%%%%%%%%%%%%%%%%%%%%%%%%%%%%%%%%%%%%%%%%%%%%%%%%%%%%%

\begin{corollary}
\label{accretive1}
If $A$ is an accretive, bounded linear operator with closed range, then $X=R(A)\oplus N(A)$.
\end{corollary}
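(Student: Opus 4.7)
The plan is to verify the hypotheses of Theorem \ref{theorem} and apply it. Since $A$ is accretive, $Re[Ax, x] \geq 0$ for every $x$, so $\phi(A) \leq \pi/2 < \pi$, and $R(A)$ is closed by hypothesis; by Corollary \ref{accretive}, $\alpha(A) \leq 1$, so $R(A) \cap N(A) = \{0\}$. The only remaining hypothesis of Theorem \ref{theorem} that still needs to be checked is that $R(A) + N(A)$ be closed.

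To establish this, I would exhibit a semi-orthogonality between $N(A)$ and $R(A)$: for each nonzero $k \in N(A)$, produce a $k^* \in J(k)$ annihilating $R(A)$. In Hilbert space this is immediate, since $Re\langle A(y + tk), y + tk\rangle = Re\langle Ay, y\rangle + t\, Re\langle Ay, k\rangle$ for $y \in X$ and $t \in \mathbb{R}$ (using $Ak = 0$), and non-negativity over all real $t$ forces $Re\langle Ay, k\rangle = 0$; the analogous argument with imaginary $t$ yields $\langle Ay, k\rangle = 0$ for every $y$, so $k \perp R(A)$. For a general Banach space I would write $[Ay, y + tk] = \langle z_t^*, Ay\rangle$ with $z_t^* \in J(y + tk)$, use that $(y + tk)/\|y + tk\| \to \pm k/\|k\|$ as $t \to \pm\infty$, and invoke the upper semicontinuity of $J$ from Lemma \ref{dualitymap} (in the spirit of Proposition \ref{semi}) to extract a $w^*$-subsequential limit $k^* \in J(k)$ with $\langle k^*, Ay\rangle = 0$ for every $y$.

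Once such a $k^*$ is in hand, $\|r + k\| \geq |\langle k^*, r + k\rangle|/\|k^*\| = \|k\|$ for all $r \in R(A)$, and combined with the trivial estimate $\|r + k\| \geq \|r\| - \|k\|$ this forces $\gamma(R(A), N(A)) > 0$ and hence closedness of $R(A) + N(A)$; Theorem \ref{theorem} then delivers $X = R(A) \oplus N(A)$. The main obstacle is the extraction step in the Banach setting: the semi-inner product is not linear in its second slot, so the Hilbert-space differentiation argument does not transfer directly, and one has to work with selections in the duality map and pass to a $w^*$-subsequential limit carefully, matching the technique used in the proof of Proposition \ref{semi}.
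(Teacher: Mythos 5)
Your overall strategy---verify the hypotheses of Theorem \ref{theorem}, with the only nontrivial one being the closedness of $R(A)+N(A)$---is the right one, but it diverges from the paper at exactly that point: the paper simply quotes Sinclair's Proposition 1, which asserts that for an accretive bounded operator with closed range the sum $R(A)+N(A)$ is closed, and then applies Theorem \ref{theorem}. You instead try to prove the closedness from scratch. Your Hilbert-space argument is correct (accretivity with $Ak=0$ does force $\langle Ay,k\rangle=0$, i.e.\ $R(A)\perp N(A)$), and your deduction of $\gamma(R(A),N(A))>0$ from the estimate $\|r+k\|\geq\|k\|$ is also fine.

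The gap is in the Banach-space extraction step. When $J(k)$ is not a singleton, the $w^\ast$-subsequential limits you obtain along $t\to+\infty$ and along $t\to-\infty$ are \emph{a priori} different elements of $J(k)$: one gives some $k^\ast_+\in J(k)$ with $Re\langle k^\ast_+,Ay\rangle\geq 0$ and the other some $k^\ast_-\in J(k)$ with $Re\langle k^\ast_-,Ay\rangle\leq 0$, and there is no way to conclude that a single functional satisfies both inequalities; the imaginary-direction argument produces yet other elements of $J(k)$. So the full annihilation statement ``there is $k^\ast\in J(k)$ with $\langle k^\ast,Ay\rangle=0$'' does not follow from your argument, and in fact you do not need it. The one-sided statement that your limiting argument \emph{does} give---for each fixed $y$ and $k$, some $k^\ast\in J(k)$ (depending on $y$) with $Re\langle k^\ast,Ay\rangle\geq 0$; most cleanly obtained by letting $\varepsilon\to 0^+$ in $0\leq Re[A(k+\varepsilon y),k+\varepsilon y]=\varepsilon\,Re[Ay,k+\varepsilon y]$ and applying the cluster-point argument of Proposition \ref{semi}---already yields
\begin{equation*}
\|r+k\|\,\|k\|\geq Re\langle k^\ast,r+k\rangle=\|k\|^2+Re\langle k^\ast,r\rangle\geq\|k\|^2,
\end{equation*}
hence $\|r+k\|\geq\|k\|$ for all $r\in R(A)$, $k\in N(A)$, which combined with $\|r+k\|\geq\|r\|-\|k\|$ gives $\mathrm{dist}(r,N(A))\geq\frac{1}{2}\|r\|$ and so $\gamma(R(A),N(A))>0$. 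With that repair your proof closes the gap and essentially reproves Sinclair's lemma; as written, however, the claimed semi-orthogonality in a general Banach space is not established.
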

%%%%%%%%%%%%%%%%%%%%%%%%%%%%%%%%%%%%%%%%%%%%%%%%%%%%%%%%%%%%%%%%%%%%%%%%%%%%

\begin{proof}
Using \cite[Proposition 1]{sinclair} we get that if $A$ is an accretive, bounded linear operator with closed range, then $R(A)+N(A)$ is closed. The result then follows from Theorem \ref{theorem}.
\end{proof}
%%%%%%%%%%%%%%%%%%%%%%%%%%%%%%%%%%%%%%%%%%%%%%%%%%%%%%%%%%%%%%%%%%%%%%%%%%%%

We will now show that if $X$ is a Hilbert space, then the hypothesis that $R(A)+N(A)$ is closed can be dropped.
%%%%%%%%%%%%%%%%%%%%%%%%%%%%%%%%%%%%%%%%%%%%%%%%%%%%%%%%%%%%%%%%%%%%%%%%%%%%

\begin{theorem}
\label{hilbert}
Let $X$ be a Hilbert space and $A:X\rightarrow X$ be a bounded linear operator with closed range. If $\phi(A)<\pi$, then
\begin{equation*}
X=R(A)\oplus N(A).
\end{equation*}
\end{theorem}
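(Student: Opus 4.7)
The plan is to reduce to Theorem \ref{theorem} by verifying its missing hypothesis, namely that $R(A)+N(A)$ is closed. Since $R(A)$ is closed, the Preliminaries tell us that this is equivalent to $R(A^2)$ being closed, so it suffices to establish the latter; Theorem \ref{theorem} will then yield $X=R(A)\oplus N(A)$.

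By Proposition \ref{zero}, $\phi(A)<\pi$ forces $R(A)\cap N(A)=\{0\}$, so the restriction $T:=A|_{R(A)}:R(A)\to R(A)$ is injective, and its range equals $R(A^2)$. Since $R(A)$ is itself a Hilbert space, showing $R(A^2)$ is closed reduces to showing that $T$ is bounded below. Suppose this fails: there exists a sequence $(x_n)\subset R(A)$ with $\|x_n\|=1$ and $Ax_n\to 0$. Because $R(A)$ is closed, the induced map $A:N(A)^\perp\to R(A)$ is bijective with bounded inverse, so each $x_n$ may be written as $x_n=Az_n$ with $z_n\in N(A)^\perp$ and $\|z_n\|\leq M$ for a uniform $M>0$.

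With these $z_n$ I build an approximate analogue of the witness sequence in the proof of Proposition \ref{zero}. Choose $\epsilon_n>0$ with $\epsilon_n\to 0$ and $\|Ax_n\|/\epsilon_n\to 0$ (for instance $\epsilon_n=\max\{\|Ax_n\|^{1/2},1/n\}$), and pick $d_n>0$ so that $w_n:=\epsilon_n z_n-d_n x_n$ has norm one; the quadratic $d_n^2-2\epsilon_n\, Re\langle z_n,x_n\rangle\,d_n+\epsilon_n^2\|z_n\|^2-1=0$ has a positive root that tends to $1$. Expanding in the inner product and using $Az_n=x_n$, $\|x_n\|=1$, $\|z_n\|\leq M$ gives
\[
\|Aw_n\|=\epsilon_n+O(\|Ax_n\|),\qquad Re\langle Aw_n,w_n\rangle=-\epsilon_n d_n+O(\epsilon_n^2)+O(\|Ax_n\|),
\]
and hence
\[
\frac{Re\langle Aw_n,w_n\rangle}{\|Aw_n\|\,\|w_n\|}\longrightarrow -1,
\]
forcing $\phi(A)=\pi$, a contradiction.

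The main obstacle is the construction just described: it is a quantitative upgrade of Proposition \ref{zero} from the exact relation $Az\in N(A)$ to the approximate relation $Ax_n\to 0$ for $x_n=Az_n\in R(A)$. The delicate point is the scaling, which must make $\epsilon_n$ go to $0$ slowly enough that the term $-\epsilon_n d_n$ dominates $\|Aw_n\|$, yet fast enough relative to $\|Ax_n\|$ that the error terms vanish in the limit. The choice $\|Ax_n\|/\epsilon_n\to 0$ is what balances these two requirements.
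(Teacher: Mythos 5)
Your proof is correct and follows essentially the same route as the paper: reduce to Theorem \ref{theorem} by showing $R(A)+N(A)$ is closed via the closedness of $R(A^2)$, and, assuming the contrary, build a unit-norm sequence of the form (small multiple of a preimage) minus (the vector itself) to force $\phi(A)=\pi$. The paper's choice $z_n$ with $\|Az_n\|=1$, $\|A^2z_n\|\le 1/n^2$ and $x_n=\frac{1}{n}z_n-t_nAz_n$ is exactly your construction with $\epsilon_n=1/n$, so the two arguments coincide up to the explicit parametrization of the decay rate.
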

%%%%%%%%%%%%%%%%%%%%%%%%%%%%%%%%%%%%%%%%%%%%%%%%%%%%%%%%%%%%%%%%%%%%%%%%%%%%

\begin{proof}
By Theorem \ref{theorem} we need to prove that $R(A)+N(A)$ is closed. 

If we assume that this is not true, then, by what we said in the Preliminaries, $R(A^2)$ is not closed and so $R(A|_{R(A)})$ is not closed. Thus we can find a sequence $(z_n)$ in $X$ such that 
\begin{equation}
\label{n}
\|Az_n\|=1
\end{equation}
and
\begin{equation}
\label{nsquare}
\|A^2 z_n\|\leq \frac{1}{n^2}\,,
\end{equation}
for all $n\in\mathbb{N}$. Without loss of generality we may assume that $(z_n)$ lies in $N(A)^{\perp}$. Since $R(A|_{N(A)^{\perp}})=R(A)$, $A:N(A)^{\perp}\rightarrow X$ is 1-1 with closed range and so there exists $c>0$ such that
\begin{equation*}
\|A z_n\|\geq c\,\|z_n\|\text{ for all }n\in\mathbb{N}.
\end{equation*} 
Thus $(z_n)$ is bounded and so
\begin{equation}
\label{z}
\lim_{n\rightarrow \infty}\left\|\frac{1}{n}\,z_n\right\|=0
\end{equation}
and in particular there exists $n_0\in\mathbb{N}$ such that $\|\frac{1}{n}\,z_n\|<1$, for all $n\geq n_0$. 

As in the proof of Proposition \ref{zero}, for each $n\geq n_0$, there exists $t_n>0$ such that for
\begin{equation*}
x_n=\frac{1}{n}\,z_n-t_n\,Az_n
\end{equation*}
we have that
\begin{equation}
\label{x}
\|x_n\|=1.
\end{equation}

By (\ref{n}) and (\ref{x}) we get that
\begin{equation}
\label{t}
\lim_{n\rightarrow \infty}t_n=1. 
\end{equation}
By (\ref{n}), (\ref{nsquare}) and (\ref{t}) we get that 
\begin{equation*}
\lim_{n\rightarrow\infty}\|Az_n-n\,t_n\,A^2z_n\|=1
\end{equation*} 
and so 
\begin{equation}
\label{nA}
\lim_{n\rightarrow\infty}n\,\|Ax_n\|=1.
\end{equation}

Using the Cauchy--Schwarz inequality, (\ref{n}), (\ref{x}) and (\ref{nsquare}) we get that
\begin{equation*}
\frac{Re\langle Ax_n, x_n\rangle}{\|Ax_n\|}\leq\frac{\|\frac{1}{n}\,z_n\|}{n\,\|Ax_n\|}-\frac{t_n}{n\,\|Ax_n\|}+\frac{t_n}{n^2\,\|Ax_n\|}\,,
\end{equation*}
for all $n\geq n_0$. But, by (\ref{z}), (\ref{nA}) and (\ref{t}),
\begin{equation*}
\lim_{n\rightarrow\infty}\left(\frac{\|\frac{1}{n}\,z_n\|}{n\,\|Ax_n\|}-\frac{t_n}{n\,\|Ax_n\|}+\frac{t_n}{n^2\,\|Ax_n\|}\right)=-1.
\end{equation*}
Therefore $\phi(A)=\pi$, which is a contradiction. Hence
\begin{equation*}
X=R(A)\oplus N(A).
\end{equation*}
\end{proof}
%%%%%%%%%%%%%%%%%%%%%%%%%%%%%%%%%%%%%%%%%%%%%%%%%%%%%%%%%%%%%%%%%%%%%%%%%%%%

\begin{remark}
(i) In the proof of the previous theorem the properties of a Hilbert space which were used are that there exists a complement of $N(A)$ and that $\langle\cdot,\cdot\rangle$ is additive in the second variable. We don't know if the result of the theorem holds if we assume that $X$ is a Banach space and $N(A)$ is complemented in $X$.\\
(ii) If $X$ is a Hilbert space and $A$  is a bounded linear operator with closed range such that $\varphi(tA)<\pi$, for some $0\neq t\in\mathbb{C}$, then $X=R(A)\oplus N(A)$.
\end{remark}
%%%%%%%%%%%%%%%%%%%%%%%%%%%%%%%%%%%%%%%%%%%%%%%%%%%%%%%%%%%%%%%%%%%%%%%%%%%%

We will now discuss the case of a finite dimensional $X$. We will show that, under the additional assumption that $X$ is strictly convex, we can use the angle to characterize operators $A$ for which
\begin{equation*}
X=R(A)\oplus N(A).
\end{equation*}
%%%%%%%%%%%%%%%%%%%%%%%%%%%%%%%%%%%%%%%%%%%%%%%%%%%%%%%%%%%%%%%%%%%%%%%%%%%%

Recall that if $X$ is strictly convex and $x, y\in X$ with $x, y\neq 0$, then
\begin{equation*}
[x,y]=\|x\|\,\|y\|
\end{equation*}
implies that $y=\lambda x$, for some $\lambda\neq 0$ \cite[Theorem 5.1]{berkson}.
%%%%%%%%%%%%%%%%%%%%%%%%%%%%%%%%%%%%%%%%%%%%%%%%%%%%%%%%%%%%%%%%%%%%%%%%%%%%

\begin{theorem}
\label{characterization}
Let $X$ be a strictly convex, finite dimensional Banach space and $A:X\rightarrow X$ be a linear operator. Then
\begin{equation*}
X=R(A)\oplus N(A)
\end{equation*}
if and only if there exists $0\neq t\in\mathbb{C}$ such that $\varphi(tA)<\pi$.
\end{theorem}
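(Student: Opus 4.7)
The plan is to establish both implications separately. The direction $(\Leftarrow)$ follows immediately from Proposition \ref{zero} (or Remark \ref{rem}(vi)): if $\varphi(tA) < \pi$ for some $0 \neq t \in \mathbb{C}$, then $R(A)\cap N(A) = R(tA)\cap N(tA) = \{0\}$, and finite dimensionality together with the rank--nullity theorem gives $X = R(A) \oplus N(A)$.

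For the converse, assume $X = R(A) \oplus N(A)$. Since $A$ has only finitely many nonzero eigenvalues $\lambda_1,\dots,\lambda_m$, and for each $\lambda_i$ the set $\{t\in\mathbb{C}:t\lambda_i\in(-\infty,0)\}$ is a single ray through the origin, I can choose $0\neq t\in\mathbb{C}$ so that $tA$ has no negative real eigenvalue. Setting $B = tA$, the goal is to show $\cos B > -1$, which yields $\varphi(tA) < \pi$.

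Suppose for contradiction that $\cos B = -1$, and pick $x_n \in X$ with $\|x_n\| = 1$, $x_n\notin N(A)$, and $Re\,[Bx_n, x_n]/\|Bx_n\|\to -1$. Writing $[Bx_n, x_n] = \langle x_n^\ast, Bx_n\rangle$ with $x_n^\ast \in J(x_n)$ and using compactness in the finite-dimensional $X$ and $X^\ast$, I extract subsequences along which $x_n \to x$ with $\|x\|=1$ and $x_n^\ast \to x^\ast$; Lemma \ref{dualitymap} ensures $x^\ast \in J(x)$. If $Bx \neq 0$, passing to the limit gives $\langle x^\ast, -Bx\rangle = \|-Bx\| = \|x^\ast\|\,\|-Bx\|$, and strict convexity of $X$, which makes $x$ the unique unit vector at which $x^\ast$ attains its norm, forces $-Bx/\|Bx\| = x$; thus $Bx = -\|Bx\|x$, giving $B$ a negative real eigenvalue, contrary to the choice of $t$.

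The main obstacle is the degenerate case $Bx = 0$, where $\|Bx_n\|\to 0$ blocks a direct limit argument. Here the direct sum decomposition does the work: writing $x_n = r_n + k_n$ with $r_n \in R(A)$ and $k_n \in N(A)$, continuity of the projection onto $R(A)$ along $N(A)$ forces $r_n \to 0$; rescaling by $\rho_n = \|r_n\|$ and setting $y_n = r_n/\rho_n$ (a unit vector in $R(A)$), linearity of the semi-inner product in its first slot gives $Re\,[Bx_n, x_n]/\|Bx_n\| = Re\,\langle x_n^\ast, By_n\rangle/\|By_n\|$. A further subsequence produces $y_n \to y$ with $\|y\|=1$ and $y \in R(A)$, and $By \neq 0$ since $B|_{R(A)}$ is injective; the limit then yields $\langle x^\ast, -By\rangle = \|-By\|$, so strict convexity identifies $-By/\|By\|$ with $x$, placing this unit vector in $R(A)\cap N(A) = \{0\}$ and contradicting $\|x\|=1$. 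Combining both cases gives $\cos B > -1$, finishing the proof.
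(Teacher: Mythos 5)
Your proof is correct and follows essentially the same route as the paper: choose $t$ so that $tA$ has no negative eigenvalue, then show that a sequence driving the cosine to $-1$ would, via strict convexity, either produce a negative eigenvalue or a nonzero unit vector in $R(A)\cap N(A)$. Your renormalization $r_n/\|r_n\|$ of the range-component in the degenerate case and your direct use of the unique norming point of a functional replace, respectively, the paper's basis-coefficient normalization (the $c_n$, $b_n^i$ argument) and its citation of Berkson's theorem, but these are only cosmetic differences.
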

%%%%%%%%%%%%%%%%%%%%%%%%%%%%%%%%%%%%%%%%%%%%%%%%%%%%%%%%%%%%%%%%%%%%%%%%%%%%

\begin{proof}
If $\varphi(tA)<\pi$, then, by Remark \ref{rem}(v) and (vi) we get that
\begin{equation*}
X=R(A)\oplus N(A).
\end{equation*}

For the converse we will first show that if $B:X\rightarrow X$ is a linear operator with
\begin{equation*}
X=R(B)\oplus N(B)\text{ and }\varphi(B)=\pi,
\end{equation*}
then $B$ has at least one negative eigenvalue.

Since $\varphi(B)=\pi$, there exists a sequence $(z_n)$ in $X$ such that $z_n\notin N(B)$ and $\|z_n\|=1$, for all $n\in\mathbb{N}$, and
\begin{equation}
\label{hyp}
\lim_{n\rightarrow\infty}\frac{Re[Bz_n, z_n]}{\|Bz_n\|}=-1.
\end{equation}

Since $X=R(B)\oplus N(B)$, for each $n\in\mathbb{N}$, there exist $0\neq x_n\in R(B)$ and $y_n\in N(B)$ such that
$z_n=x_n+y_n$. Since $X$ is finite dimensional we get that, by passing to subsequences denoted again by $(x_n)$ and $(y_n)$, there exist $x\in R(B)$ and $y\in N(B)$ such that
\begin{equation*}
\lim_{n\rightarrow\infty}x_n=x\text{ and }\lim_{n\rightarrow\infty}y_n=y.
\end{equation*}
We will prove that $x\neq 0$. Let $(e_i)_{i=1}^m$ be a basis of $R(B)$. Then, for each $1\leq i\leq m$, there exists a sequence $(a_n^i)$ in $\mathbb C$ such that
\begin{equation*}
x_n=\sum_{i=1}^ma_n^ie_i,
\end{equation*}
for all $n\in\mathbb{N}$. Let $c_n=\max_{1\leq i\leq m}|a_n^i|$, $n\in\mathbb{N}$ and $b_n^i=\frac{a_n^i}{c_n}$, $n\in\mathbb{N}$ and $i=1,2,...,m$. Then $c_n>0$, for all $n\in\mathbb{N}$, $|b_n^i|\leq 1$, for all $n\in\mathbb{N}$ and $i=1,2,...,m$, and 
\begin{equation*}
x_n=c_n\,\sum_{i=1}^mb_n^ie_i,
\end{equation*}
for all $n\in\mathbb{N}$. Moreover there exists $1\leq i_0\leq m$ such that $|b_n^{i_0}|=1$, for infinitely many $n$'s. Obviously, for each $i=1,2,...,m$, there exists a subsequence of $(b_n^i)$, which for simplicity we denote again by $(b_n^i)$, such that $\lim b_n^i=b_i$. We have that
\begin{eqnarray*}
\frac{Re[Bz_n, z_n]}{\|Bz_n\|}&=&\frac{Re[Bx_n, x_n+y_n]}{\|Bx_n\|}\\
&=&\frac{Re[B(b_n^{i_0}e_{i_0}+\sum_{i\neq i_0}b_n^ie_i), x_n+y_n]}{\|B(b_n^{i_0}e_{i_0}+\sum_{i\neq i_0}b_n^ie_i)\|}\,.
\end{eqnarray*}
Assume that $x=0$. Then $\|y\|=1$ and, by (\ref{hyp}),
\begin{equation}
\label{hyp1}
\frac{Re[B(b_{i_0}e_{i_0}+\sum_{i\neq i_0}b_ie_i), y]}{\|B(b_{i_0}e_{i_0}+\sum_{i\neq i_0}b_ie_i)\|}=-1.
\end{equation}
Let $w=b_{i_0}e_{i_0}+\sum_{i\neq i_0}b_ie_i\neq 0$. Then, by (\ref{hyp1}) and the Cauchy--Schwarz inequality, we get
\begin{equation*}
|[Bw, y]|=\|Bw\|.
\end{equation*}
Hence, as we mentioned above, the strict convexity of $X$ implies that $Bw$ and $y$ are linearly dependent and so $0\neq y\in R(B)$ which is a contradiction. Therefore $x\ne 0$. Hence $z=x+y\notin N(B)$, $\|z\|=1$ and
\begin{equation*}
Re[Bz, z]=-\|Bz\|.
\end{equation*}
As before we get that $Bz$ and $z$ are linearly dependent and $[Bz, z]$ is real. Hence $Bz=\lambda z$, for some $\lambda<0$ and thus $B$ has a negative eigenvalue.

To conclude the proof take $t\in \mathbb{C}$ such that $\lambda t$ is not a negative real number for all $\lambda\in\sigma(A)$. Then $tA$ has no negative eigenvalues and 
$$X=R(tA)\oplus N(tA)\,.$$ 
Hence by the previous part of the proof $\varphi(tA)<\pi$.
\end{proof}
%%%%%%%%%%%%%%%%%%%%%%%%%%%%%%%%%%%%%%%%%%%%%%%%%%%%%%%%%%%%%%%%%%%%%%%%%%%%

\begin{remark}
\label{bilateral}
(i) If $X$ is infinite dimensional, then $X=R(A)\oplus N(A)$ does not imply that there exists $0\neq t\in\mathbb{C}$ such that $\varphi(tA)<\pi$. To see that let $A$ be the bilateral shift on $l^2(\mathbb{Z})$. Then $A$ is unitary and $\sigma(A)$ is equal to the unit circle. Hence, for any $0\neq t\in \mathbb{C}$,
\begin{equation*}
\inf_{\|x\|=1}\frac{Re\langle (tA)x, x\rangle}{\|(tA)x\|}=-1.
\end{equation*}
Thus $\phi(tA)=\pi$, for all $t\neq 0$.\\
(ii) We don't know if the result of the above theorem holds if we omit the hypothesis that $X$ is strictly convex.
\end{remark}
%%%%%%%%%%%%%%%%%%%%%%%%%%%%%%%%%%%%%%%%%%%%%%%%%%%%%%%%%%%%%%%%%%%%%%%%%%%%

\section{Applications}
%%%%%%%%%%%%%%%%%%%%%%%%%%%%%%%%%%%%%%%%%%%%%%%%%%%%%%%%%%%%%%%%%%%%%%%%%%%%

Our first application is a simple proof of a theorem of N. Nirschl and H. Schneider \cite{nirschl}, \cite[Theorem 10.10]{bonsal}.
%%%%%%%%%%%%%%%%%%%%%%%%%%%%%%%%%%%%%%%%%%%%%%%%%%%%%%%%%%%%%%%%%%%%%%%%%%%%

\begin{theorem}[Nirschl and Schneider]
\label{nirschl}
Let $A:X\rightarrow X$ be a bounded linear operator with $0\in\partial\,\overline{co}\,V(A)$. Then $\alpha(A)\leq1$.
\end{theorem}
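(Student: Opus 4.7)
The plan is to rotate $A$ into an accretive operator and then invoke Corollary \ref{accretive}.

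By the equality $\partial\,\overline{co}\,W(A)=\partial\,\overline{co}\,V(A)$ recalled in the Preliminaries, the hypothesis $0\in\partial\,\overline{co}\,V(A)$ is equivalent to $0\in\partial\,\overline{co}\,W(A)$. Since $\overline{co}\,W(A)$ is a closed convex subset of $\mathbb{C}$ with $0$ on its boundary, I would apply the supporting hyperplane theorem in the plane to produce $t\in\mathbb{C}$ with $|t|=1$ such that $Re(tz)\geq 0$ for every $z\in\overline{co}\,W(A)$, and in particular for every $z=[Ax,x]$ with $\|x\|=1$.

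Using linearity of the semi-inner product in its first argument, this rewrites as $Re[tAx,x]\geq 0$ for all unit vectors $x$, and by positive homogeneity for every $x\in X$. Hence $tA$ is accretive with respect to the very semi-inner product $[\cdot,\cdot]$ used to define $W(A)$, and Corollary \ref{accretive} yields $\alpha(tA)\leq 1$. Since $t\neq 0$, one checks directly that $N((tA)^k)=N(t^kA^k)=N(A^k)$ for every $k\in\mathbb{N}$, so $\alpha(A)=\alpha(tA)\leq 1$, which is the desired conclusion.

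I do not foresee a serious obstacle. The supporting hyperplane step is standard planar geometry (and is trivial in the degenerate case $\overline{co}\,W(A)=\{0\}$, where any unimodular $t$ works), the accretivity of $tA$ is a one-line computation from the choice of $t$, and the invariance of the ascent under multiplication by a nonzero scalar is immediate. The essential content of the theorem is precisely the rotation trick that converts a boundary condition on the numerical range into honest accretivity; once that is in place, Corollary \ref{accretive} closes the argument.
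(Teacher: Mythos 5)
Your proposal is correct and follows essentially the same route as the paper: identify $\partial\,\overline{co}\,V(A)$ with $\partial\,\overline{co}\,W(A)$, use a supporting line at the origin to find a unimodular $t$ making $tA$ accretive, and conclude via Corollary \ref{accretive} together with the scalar invariance of the ascent. The extra details you supply (the supporting hyperplane argument and the check that $N((tA)^k)=N(A^k)$) are exactly the steps the paper leaves implicit.
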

%%%%%%%%%%%%%%%%%%%%%%%%%%%%%%%%%%%%%%%%%%%%%%%%%%%%%%%%%%%%%%%%%%%%%%%%%%%%

\begin{proof}
As we already mentioned in the Preliminaries, $\partial\,\overline{co}\,V(A)=\partial\,\overline{co}\,W(A)$.
Since $0\in\partial\,\overline{co}\,W(A)$, there exists some $0\neq t\in \mathbb{C}$ such that $Re\,\lambda\geq 0$, for all $\lambda\in W(tA)$. Thus
\begin{equation*}
\inf\left\{Re[(tA)x,x]\,:\,\|x\|=1\right\}\geq 0
\end{equation*}
and so, by Corollary \ref{accretive}, we get that $\alpha(tA)\leq1$. Since $\alpha(tA)=\alpha(A)$, we have that $\alpha(A)\leq 1$.
\end{proof}
%%%%%%%%%%%%%%%%%%%%%%%%%%%%%%%%%%%%%%%%%%%%%%%%%%%%%%%%%%%%%%%%%%%%%%%%%%%%

In exactly the same manner, using Corollary \ref{accretive1} instead of Corollary \ref{accretive}, and $i(tA)=i(A)$, we can get an alternative proof of the following result of A. M. Sinclair \cite[Proposition 3]{sinclair}.
%%%%%%%%%%%%%%%%%%%%%%%%%%%%%%%%%%%%%%%%%%%%%%%%%%%%%%%%%%%%%%%%%%%%%%%%%%%%

\begin{proposition}[Sinclair]
\label{sinclair}
Let $A:X\rightarrow X$ be a bounded linear operator with closed range. If $0\in\partial\,\overline{co}\,V(A)$, then $i(A)\leq 1$.
\end{proposition}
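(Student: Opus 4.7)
The plan is to follow the template of the Nirschl--Schneider proof just above, substituting Corollary \ref{accretive1} for Corollary \ref{accretive}. First, I would invoke the identity $\partial\,\overline{co}\,V(A)=\partial\,\overline{co}\,W(A)$ recorded in the Preliminaries to replace the hypothesis $0\in\partial\,\overline{co}\,V(A)$ by $0\in\partial\,\overline{co}\,W(A)$.

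Since $\overline{co}\,W(A)$ is a closed convex subset of $\mathbb{C}$ with $0$ on its boundary, a standard separating line argument (the real-linear Hahn--Banach theorem in $\mathbb{R}^2$) supplies some $0\neq t\in\mathbb{C}$ with $\mathrm{Re}(t\lambda)\geq 0$ for every $\lambda\in W(A)$. Since $W(tA)=tW(A)$, this rewrites as $\mathrm{Re}[(tA)x,x]\geq 0$ for all unit vectors $x$, and hence for all $x\in X$ by homogeneity. In other words, $tA$ is accretive.

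Next, because $R(tA)=R(A)$, the operator $tA$ still has closed range, so Corollary \ref{accretive1} applies and yields $X=R(tA)\oplus N(tA)$. By the equivalence of conditions (i) and (ii) recalled in the Preliminaries, this gives $\alpha(tA)=\delta(tA)\leq 1$, i.e.\ $i(tA)\leq 1$. Finally, since $R((tA)^k)=R(A^k)$ and $N((tA)^k)=N(A^k)$ for every $k\in\mathbb{N}$, we have $i(tA)=i(A)$, whence $i(A)\leq 1$.

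I do not foresee a genuine obstacle: the only non-bookkeeping step is the extraction of the scalar $t$ from the boundary condition, which is the same separation argument already used in the proof of Theorem \ref{nirschl}. The real content has been absorbed into Corollary \ref{accretive1}, which itself rests on Theorem \ref{theorem} and the Saint-Raymond--type result \cite[Theorem 2.14]{drivyann}.
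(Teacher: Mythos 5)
Your proposal is correct and is essentially identical to the paper's own argument: the paper proves this proposition by repeating the Nirschl--Schneider proof verbatim with Corollary \ref{accretive1} in place of Corollary \ref{accretive} and using $i(tA)=i(A)$. All the steps you spell out (the boundary identity, the separating-line extraction of $t$, accretivity of $tA$, and the closed-range transfer) match the intended route.
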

%%%%%%%%%%%%%%%%%%%%%%%%%%%%%%%%%%%%%%%%%%%%%%%%%%%%%%%%%%%%%%%%%%%%%%%%%%%%

Our next result is an application of Theorem \ref{hilbert} and presents a sufficient condition for a bounded linear operator $A$ with closed range to satisfy
\begin{equation*}
X=R(A)\oplus N(A).
\end{equation*}
This condition involves the distance of the boundary of the numerical range from the origin.
%%%%%%%%%%%%%%%%%%%%%%%%%%%%%%%%%%%%%%%%%%%%%%%%%%%%%%%%%%%%%%%%%%%%%%%%%%%%

To proceed we need the following lemma.
%%%%%%%%%%%%%%%%%%%%%%%%%%%%%%%%%%%%%%%%%%%%%%%%%%%%%%%%%%%%%%%%%%%%%%%%%%%%

\begin{lemma}
\label{angleangle}
Let $X$ be a Hilbert space and $A:X\rightarrow X$ be a bounded linear operator such that $R(A)\perp N(A)$. If $\phi(A)>\frac{\pi}{2}$, then
\begin{equation*}
\phi(A)=\phi(A|_{N(A)^\perp}).
\end{equation*}
\end{lemma}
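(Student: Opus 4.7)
The plan is to compare the two infima in the definitions of $\cos A$ and $\cos(A|_{N(A)^\perp})$, using the orthogonality hypothesis to reduce any candidate vector $x\notin N(A)$ to its component in $N(A)^\perp$.

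First I would record that $R(A)\perp N(A)$ forces $R(A)\subseteq N(A)^\perp$, so $A$ sends $N(A)^\perp$ into itself and $A|_{N(A)^\perp}$ has trivial kernel (intersect $N(A)$ with $N(A)^\perp$). Since $\{x\in N(A)^\perp:x\ne 0\}\subseteq\{x\in X:x\notin N(A)\}$, taking the infimum of the same quotient over a smaller set immediately gives the easy half
\[
\cos A\le\cos(A|_{N(A)^\perp}),\qquad\text{i.e.}\qquad\phi(A)\ge\phi(A|_{N(A)^\perp}),
\]
without using the hypothesis $\phi(A)>\pi/2$.

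For the reverse direction I would orthogonally decompose any $x\notin N(A)$ as $x=x_1+x_2$ with $x_1\in N(A)^\perp$ and $x_2\in N(A)$. Because $Ax_2=0$ we have $Ax=Ax_1$, so $x_1\ne 0$ and $\|Ax\|=\|Ax_1\|$. Since $Ax_1\in R(A)\subseteq N(A)^\perp$ is orthogonal to $x_2$, linearity of the inner product in the second variable gives $\langle Ax,x\rangle=\langle Ax_1,x_1\rangle$. Finally $\|x\|^2=\|x_1\|^2+\|x_2\|^2\ge\|x_1\|^2$.

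Now the hypothesis $\phi(A)>\pi/2$ enters as a sign condition: it says $\cos A<0$, so the infimum defining $\cos A$ can just as well be taken over those $x\notin N(A)$ with $\mathrm{Re}\langle Ax,x\rangle<0$ (any $x$ with nonnegative ratio already exceeds $0>\cos A$ and contributes nothing). For such an $x$ we have $\mathrm{Re}\langle Ax_1,x_1\rangle=\mathrm{Re}\langle Ax,x\rangle<0$, and dividing this negative number by the larger denominator $\|Ax_1\|\,\|x\|\ge\|Ax_1\|\,\|x_1\|$ yields
\[
\frac{\mathrm{Re}\langle Ax,x\rangle}{\|Ax\|\,\|x\|}=\frac{\mathrm{Re}\langle Ax_1,x_1\rangle}{\|Ax_1\|\,\|x\|}\ge\frac{\mathrm{Re}\langle Ax_1,x_1\rangle}{\|Ax_1\|\,\|x_1\|}\ge\cos(A|_{N(A)^\perp}).
\]
Passing to the infimum over such $x$ gives $\cos A\ge\cos(A|_{N(A)^\perp})$, and combined with the easy half this yields equality of the cosines, hence of the angles.

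The only genuinely delicate point is the sign bookkeeping in the displayed chain: if $\mathrm{Re}\langle Ax_1,x_1\rangle$ were positive, enlarging the denominator would move the ratio the wrong way and the argument would collapse. This is exactly why the hypothesis $\phi(A)>\pi/2$ is essential, and I do not expect any other obstacle.
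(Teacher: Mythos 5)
Your proof is correct and follows essentially the same route as the paper's: the same orthogonal decomposition $x=x_1+x_2$, the same identities $\langle Ax,x\rangle=\langle Ax_1,x_1\rangle$ and $\|Ax\|=\|Ax_1\|$ together with $\|x\|\ge\|x_1\|$, and the same use of $\cos A<0$ to ensure the enlarged denominator moves the ratio in the right direction. The only difference is one of presentation: you run the estimate pointwise over every $x$ with negative ratio, whereas the paper phrases it along a minimizing sequence and extracts a subsequence with $\|x_n\|\to 1$.
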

%%%%%%%%%%%%%%%%%%%%%%%%%%%%%%%%%%%%%%%%%%%%%%%%%%%%%%%%%%%%%%%%%%%%%%%%%%%%

\begin{proof}
Assume that $(z_n)$ is a sequence in $X$ such that $z_n\notin N(A)$, $\|z_n\|=1$, for all $n\in\mathbb{N}$, and
\begin{equation*}
\lim_{n\rightarrow\infty}\frac{Re\langle Az_n, z_n\rangle}{\|Az_n\|}=\cos A.
\end{equation*}
Then, for each $n\in\mathbb{N}$, there exist $x_n\in N(A)^\perp$ and $y_n\in N(A)$ such that
\begin{equation*}
z_n=x_n+y_n.
\end{equation*}
Since $R(A)\perp N(A)$,
\begin{equation}
\label{inequality}
\frac{Re\langle Az_n, z_n\rangle}{\|Az_n\|}=\frac{Re\langle Ax_n, x_n\rangle}{\|Ax_n\|\,\|x_n\|}\,\|x_n\|\geq \cos A\,\|x_n\|,
\end{equation}
for all $n\in\mathbb{N}$. Passing to a subsequence we may assume that $\displaystyle{\lim_{n\rightarrow\infty}}\|x_n\|=1$. 
Indeed if we assume the contrary, i.e.\ that $\displaystyle{\lim_{n\rightarrow\infty}}\|x_n\|<1$, using $\phi(A)>\frac{\pi}{2}$, we get $\cos A\,\displaystyle{\lim_{n\rightarrow\infty}}\|x_n\|>\cos A$, which contradicts (\ref{inequality}). So
\begin{equation*}
\cos \left(A|_{N(A)^\perp}\right)\leq\lim_{n\rightarrow\infty}\frac{Re\langle Ax_n, x_n\rangle}{\|Ax_n\|\,\|x_n\|}=\cos A.
\end{equation*}
On the other hand, by definition,
\begin{equation*}
\cos \left(A|_{N(A)^\perp}\right)\geq \cos A,
\end{equation*}
and so $\cos \left(A|_{N(A)^\perp}\right)=\cos A$. Hence
\begin{equation*}
\phi(A|_{N(A)^\perp})=\phi(A).
\end{equation*}
\end{proof}
%%%%%%%%%%%%%%%%%%%%%%%%%%%%%%%%%%%%%%%%%%%%%%%%%%%%%%%%%%%%%%%%%%%%%%%%%%%%

\begin{remark}
Lemma \ref{angleangle} does not hold if $\varphi(A)\leq\frac{\pi}{2}$. To see that let $A$ be an orthogonal projection with $A\neq 0, I_X$. Then $R(A)\perp N(A)$ and
\begin{equation*}
\phi(A)=\frac{\pi}{2}>\phi(A|_{N(A)^\perp})=0.
\end{equation*}
\end{remark}
%%%%%%%%%%%%%%%%%%%%%%%%%%%%%%%%%%%%%%%%%%%%%%%%%%%%%%%%%%%%%%%%%%%%%%%%%%%%

If $A$ has closed range, by $c_A$ we denote the largest positive constant $c$ such that $\|Ax\|\geq c\|x\|$, for all $x\in N(A)^\perp$.
%%%%%%%%%%%%%%%%%%%%%%%%%%%%%%%%%%%%%%%%%%%%%%%%%%%%%%%%%%%%%%%%%%%%%%%%%%%%

Our result is the following.
%%%%%%%%%%%%%%%%%%%%%%%%%%%%%%%%%%%%%%%%%%%%%%%%%%%%%%%%%%%%%%%%%%%%%%%%%%%%

\begin{theorem}
\label{numrange}
Let $X$ be a Hilbert space, $A:X\rightarrow X$ be a bounded linear operator with closed range such that $R(A)+N(A)$ is closed and
\begin{equation*}
R(A)\cap N(A)=\{0\}\,.
\end{equation*} 
Let
\begin{equation*}
\rho_A=c_A\cdot\gamma(R(A),N(A)).
\end{equation*}
If
\begin{equation*}
\partial W(A)\cap B(0, \rho_A)\neq\emptyset,
\end{equation*}
then $X=R(A)\oplus N(A)$.
\end{theorem}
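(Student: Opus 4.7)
The plan is to extract from the point $\lambda_0 \in \partial W(A) \cap B(0, \rho_A)$ a uniform real lower bound on $\mathrm{Re}\langle (tA)x, x\rangle$ for some unit $t \in \mathbb{C}$, combine it with the orthogonal splitting $X = N(A)^\perp \oplus N(A)$ to conclude $\cos(tA) > -1$, and then invoke Theorem \ref{hilbert}.

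First, since $W(A)$ is convex by the Toeplitz--Hausdorff theorem, $\partial W(A) = \partial \overline{W(A)}$, and the closed convex set $\overline{W(A)} \subseteq \mathbb{C}$ admits a supporting line at $\lambda_0$. Hence there is $t \in \mathbb{C}$ with $|t| = 1$ such that
\begin{equation*}
\mathrm{Re}\langle (tA)x, x\rangle \geq -\rho, \quad \|x\| = 1,
\end{equation*}
where $\rho := -\mathrm{Re}(t\lambda_0)$ satisfies $\rho \leq |\lambda_0| < \rho_A$. Replacing $A$ by $tA$ leaves $R(A)$, $N(A)$, $c_A$, $\gamma(R(A), N(A))$ and the desired conclusion unchanged, so I may assume $t = 1$. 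If $\rho \leq 0$, then $A$ is accretive, hence $\phi(A) \leq \pi/2 < \pi$ and Theorem \ref{hilbert} applies directly; so the interesting case is $0 < \rho < \rho_A$.

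Given $\|x\| = 1$ with $x \notin N(A)$, decompose $x = u + v$ orthogonally with $u \in N(A)^\perp \setminus \{0\}$ and $v \in N(A)$, so $\|u\|^2 + \|v\|^2 = 1$ and $Ax = Au$. The Hilbert-space content of $\gamma$ is the identity
\begin{equation*}
\sup\{\|P_{N(A)} a\|:\, a \in R(A),\, \|a\| = 1\} = \sqrt{1 - \gamma^2} =: s,
\end{equation*}
obtained from $\|a\|^2 = \|P_{N(A)^\perp} a\|^2 + \|P_{N(A)} a\|^2$ together with $R(A) \cap N(A) = \{0\}$. Since $Au \in R(A)$ and $v \in N(A)$, this gives $|\langle Au, v\rangle| \leq s\|Au\|\|v\|$. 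Combining this with $\mathrm{Re}\langle Au, u\rangle \geq -\rho\|u\|^2$, expanding $\mathrm{Re}\langle Ax, x\rangle = \mathrm{Re}\langle Au, u\rangle + \mathrm{Re}\langle Au, v\rangle$, dividing by $\|Ax\|\|x\| = \|Au\|$, and using the closed-range bound $\|Au\| \geq c_A\|u\|$, I get
\begin{equation*}
\frac{\mathrm{Re}\langle Ax, x\rangle}{\|Ax\|\|x\|} \geq -\frac{\rho}{c_A}\|u\| - s\|v\| \geq -\sqrt{\frac{\rho^2}{c_A^2} + s^2},
\end{equation*}
the last step by Cauchy--Schwarz applied to $(\rho/c_A, s)$ and $(\|u\|, \|v\|)$ in $\mathbb{R}^2$. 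The assumption $\rho < \rho_A = c_A\gamma$ is exactly what forces $\rho^2/c_A^2 + s^2 < \gamma^2 + (1 - \gamma^2) = 1$, so $\cos A > -1$, $\phi(A) < \pi$, and Theorem \ref{hilbert} yields $X = R(A) \oplus N(A)$.

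The main point to monitor is the way Cauchy--Schwarz couples the two defects: the proximity $\rho$ of $\lambda_0$ to the origin (measuring how far a rotation of $A$ is from being accretive) and the obliqueness $s = \sqrt{1 - \gamma^2}$ between $R(A)$ and $N(A)$. These combine into $\sqrt{(\rho/c_A)^2 + s^2}$, and the threshold $\rho_A = c_A\gamma$ is precisely what keeps the combined defect strictly below $1$. A minor secondary technicality is the supporting line at $\lambda_0$ in the degenerate case where $W(A)$ has empty interior, but then $W(A)$ lies on a line and the supporting direction is trivial.
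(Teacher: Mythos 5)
Your proof is correct, but it takes a genuinely different route from the paper's. The paper introduces the auxiliary operator $B=PA$, where $P$ is the orthogonal projection onto $N(A)^{\perp}$, checks that $N(B)=N(A)$, that $B$ has closed range and that $R(B)\perp N(B)$, invokes Lemma \ref{angleangle} to reduce the angle of $B$ to that of $B|_{N(B)^{\perp}}$, proves $\|PAx\|\geq\rho_A\|x\|$ on $N(A)^{\perp}$ to conclude $\phi(B)<\pi$, applies Theorem \ref{theorem} to $B$, and finally transfers the decomposition $X=R(B)\oplus N(B)$ back to $A$ through an invertible block operator. You instead work with $A$ itself: after the supporting-line rotation (which, incidentally, justifies more carefully than the paper does why the rotated operator satisfies $\inf_{\|x\|=1}\mathrm{Re}\langle(tA)x,x\rangle\geq-\rho$, and also covers the case $\lambda_0=0$ via the accretive alternative), you encode the non-orthogonality of $R(A)$ and $N(A)$ in the single constant $s=\sqrt{1-\gamma^2}$, split $\langle Ax,x\rangle$ along $x=u+v$, and couple the two defects $\rho/c_A$ and $s$ by Cauchy--Schwarz to obtain the explicit bound $\cos(tA)\geq-\sqrt{(\rho/c_A)^2+(1-\gamma^2)}>-1$, after which Theorem \ref{hilbert} finishes. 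Your version is shorter, avoids both Lemma \ref{angleangle} and the final transfer step, and yields the stronger quantitative statement that the rotated operator $tA$ itself has angle less than $\pi$, making transparent why the threshold $\rho_A=c_A\gamma$ is the right one; the paper's version, by contrast, localizes the estimate on $N(A)^{\perp}$ via the projected operator and reuses the machinery (Lemma \ref{angleangle}) developed for the later results on spectra avoiding rays. All the individual steps check out: $\mathrm{dist}(a,N(A))=\|P_{N(A)^{\perp}}a\|$ gives $\|P_{N(A)}a\|\leq s\|a\|$ on $R(A)$ from (\ref{gap1}), $Ax=Au$ with $\|Au\|\geq c_A\|u\|$, and $\rho<c_A\gamma$ forces $(\rho/c_A)^2+s^2<1$.
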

%%%%%%%%%%%%%%%%%%%%%%%%%%%%%%%%%%%%%%%%%%%%%%%%%%%%%%%%%%%%%%%%%%%%%%%%%%%%

\begin{proof}
First of all note that, since $R(A)$ is closed, $c_A>0$, and, since $R(A)+N(A)$ is closed, $\gamma(R(A), N(A))>0$. Thus $\rho_A>0$. 

Moreover note that, since $R(A)\cap N(A)=\{0\}$,
\begin{equation}
\label{gap1}
\gamma(R(A),N(A))=\inf_{0\neq x\in R(A)}\frac{\mathrm{dist}(x,N(A))}{\|x\|}\,.
\end{equation}

If
\begin{equation*}
0\neq\lambda=\rho e^{i\theta}\in \partial W(A)\cap B(0, \rho_A),
\end{equation*}
then $0<\rho<\rho_A$ and hence if we multiply $A$ by $t=e^{i(\pi-\theta)}$ we get that
\begin{equation*}
\inf_{\|x\|=1} Re\langle (tA)x,x\rangle=-\rho>-\rho_A.
\end{equation*}
Hence without loss of generality we may assume that
\begin{equation}
\label{ep}
-\rho_A<\inf_{\|x\|=1} Re\langle Ax,x\rangle<0.
\end{equation}

Let $P$ be the orthogonal projection onto $N(A)^\perp$. Since $R(A)+N(A)$ is closed, by \cite[Theorem 2.1]{schochetman},
\begin{equation*}
P|_{R(A)}:R(A)\rightarrow N(A)^\perp
\end{equation*}
has closed range. Moreover, since $R(A)\cap N(A)=\{0\}$, it is injective.

Let $B:X\rightarrow X$ with $B=PA$. Since 
\begin{equation*}
P|_{R(A)}:R(A)\rightarrow N(A)^\perp
\end{equation*}
is injective we have that $N(B)=N(A)$. Also since
\begin{equation*}
P|_{R(A)}:R(A)\rightarrow N(A)^\perp
\end{equation*}
has closed range, the operator $B$ has closed range. Finally, it is obvious that $R(B)\perp N(B)$.

We will show that $X=R(B)\oplus N(B)$.

If $\phi(B)\leq\frac{\pi}{2}$, then, by Theorem \ref{theorem},
\begin{equation*}
X=R(B)\oplus N(B).
\end{equation*}

On the other hand, if $\phi(B)>\frac{\pi}{2}$, then, by Lemma \ref{angleangle}, we get that
\begin{equation*}
\phi(B)=\phi(B|_{N(B)^\perp}).
\end{equation*}
By (\ref{gap1}),
\begin{equation*}
\|PAx\|=d(Ax,N(A))\geq\gamma(R(A),N(A))\|Ax\|,\text{ for all }x\in X.
\end{equation*}
Thus
\begin{equation}
\label{gap}
\|PAx\|\geq\rho_A\|x\|,\text{ for all }x\in N(A)^\perp.
\end{equation}
Take $x\in N(B)^\perp=N(A)^\perp$ such that $\|x\|=1$ and  $Re\langle Bx, x\rangle<0$. Then using (\ref{ep}) and (\ref{gap}) we get that
\begin{equation*}
\frac{Re\langle Bx, x\rangle}{\|Bx\|}=\frac{Re\langle Ax, x\rangle}{\|PAx\|}\geq\frac{Re\langle Ax, x\rangle}{\rho_A}\geq\frac{\inf_{\|y\|=1}Re\langle Ay, y\rangle}{\rho_A}>-1.
\end{equation*}
Hence
\begin{equation*}
\phi(B)=\phi(B|_{N(B)^\perp})<\pi.
\end{equation*}
Using Theorem \ref{theorem} we get that
\begin{equation*}
X=R(B)\oplus N(B).
\end{equation*}

To conclude the proof note that the operator
\begin{equation*}
U=\left(
\begin{array}{ccl}
(P|_{R(A)})^{-1} & 0\\
0 & I
\end{array}
\right),
\end{equation*}
with respect to the decomposition $X=R(B)\oplus N(B)$,
is one-to-one and onto $X$. Thus
\begin{equation*}
X=R(A)\oplus N(A).
\end{equation*}
\end{proof}
%%%%%%%%%%%%%%%%%%%%%%%%%%%%%%%%%%%%%%%%%%%%%%%%%%%%%%%%%%%%%%%%%%%%%%%%%%%%

We will now deal with  operators whose spectrum does not intersect all rays emanating from the origin. Note that we will use again Lemma \ref{angleangle}.
%%%%%%%%%%%%%%%%%%%%%%%%%%%%%%%%%%%%%%%%%%%%%%%%%%%%%%%%%%%%%%%%%%%%%%%%%%%%

\begin{definition}
By a  ray emanating from the origin we mean the set
\begin{equation*}
R_\omega=\left\{0\neq t\in \mathbb C:\, \arg t=\omega\right\},
\end{equation*}
for some $\omega\in[0, 2\pi)$.
\end{definition}
%%%%%%%%%%%%%%%%%%%%%%%%%%%%%%%%%%%%%%%%%%%%%%%%%%%%%%%%%%%%%%%%%%%%%%%%%%%%

To proceed we need the following lemma.
%%%%%%%%%%%%%%%%%%%%%%%%%%%%%%%%%%%%%%%%%%%%%%%%%%%%%%%%%%%%%%%%%%%%%%%%%%%%

\begin{lemma}
\label{spectrum}
Let $X$ be a Hilbert space and $A:X\rightarrow X$ be a bounded linear operator with closed range such that $R(A)\perp N(A)$. If $\varphi(A)=\pi$, then
\begin{equation*}
\sigma(A)\cap R_\pi\neq \emptyset.
\end{equation*}
\end{lemma}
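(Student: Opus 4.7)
The plan is to use the orthogonality hypothesis to split off $N(A)$, apply Lemma~\ref{angleangle} to inherit $\varphi=\pi$ on the injective part $B=A|_{N(A)^\perp}$, and then produce a negative approximate eigenvalue for $B$ by a direct Hilbert space computation.

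Because $R(A)\perp N(A)$, we have the orthogonal decomposition $X=N(A)^\perp\oplus N(A)$ with $R(A)\subseteq N(A)^\perp$, so $N(A)^\perp$ is $A$-invariant and $A$ splits as $B\oplus 0$, where $B:=A|_{N(A)^\perp}:N(A)^\perp\to N(A)^\perp$ is injective with closed range $R(B)=R(A)$. In particular $\sigma(B)\subseteq \sigma(A)$, so it suffices to produce some $\mu>0$ with $-\mu\in \sigma(B)$. Since $\varphi(A)=\pi>\pi/2$, Lemma~\ref{angleangle} gives $\varphi(B)=\pi$.

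From $\varphi(B)=\pi$ I would extract unit vectors $x_n\in N(A)^\perp$ with
\[
\frac{Re\langle Bx_n,x_n\rangle}{\|Bx_n\|}\longrightarrow -1.
\]
Injectivity of $B$ together with the closed-range hypothesis gives a constant $c>0$ such that $c\leq \|Bx_n\|\leq \|B\|$ for all $n$, so after passing to a subsequence I may assume $\|Bx_n\|\to \mu$ for some $\mu\in[c,\|B\|]$ and consequently $Re\langle Bx_n,x_n\rangle\to -\mu$. A direct expansion then yields
\[
\|(B+\mu I)x_n\|^2=\|Bx_n\|^2+2\mu\,Re\langle Bx_n,x_n\rangle+\mu^2\longrightarrow \mu^2-2\mu^2+\mu^2=0,
\]
so $-\mu$ is an approximate eigenvalue of $B$, hence $-\mu\in \sigma(B)\subseteq \sigma(A)$. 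Since $\mu>0$, this gives $-\mu\in \sigma(A)\cap R_\pi$, as required.

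The real work sits in Lemma~\ref{angleangle}: once it lets us reduce to the injective operator $B$ with $\varphi(B)=\pi$, the approximate-eigenvalue computation above is essentially forced, and I do not expect any further obstacle.
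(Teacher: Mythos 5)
Your proof is correct: the decomposition $A=B\oplus 0$ on $X=N(A)^\perp\oplus N(A)$, the use of Lemma~\ref{angleangle} to transfer $\varphi=\pi$ to $B$, the lower bound $\|Bx_n\|\geq c$ from injectivity plus closed range, and the conclusion that $-\mu\in\sigma(B)\subseteq\sigma(A)$ are all sound, and this is structurally the same reduction the paper performs (the paper phrases it contrapositively and normalizes $t_0=1$, but that is cosmetic). The one genuine difference is the final step. You show $\|(B+\mu I)x_n\|^2=\|Bx_n\|^2+2\mu\,Re\langle Bx_n,x_n\rangle+\mu^2\to 0$ by expanding the inner product, which is the cleanest route in a Hilbert space. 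The paper instead sandwiches $\|\frac{Ax_n-x_n}{2}\|\to 1$ and invokes uniform convexity of $X$ to force $\|Ax_n+x_n\|\to 0$. That detour is deliberate: it uses only the norm, not the inner product, and the authors explicitly reuse it afterwards to extend the argument to uniformly convex Banach spaces in their final theorem. So your version is more elementary but tied to the Hilbert space structure, while the paper's buys portability to the Banach space setting at the cost of quoting a uniform convexity lemma.
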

%%%%%%%%%%%%%%%%%%%%%%%%%%%%%%%%%%%%%%%%%%%%%%%%%%%%%%%%%%%%%%%%%%%%%%%%%%%%

\begin{proof}
We will show that 
\begin{equation*}
\sigma(A)\cap R_\pi=\emptyset
\end{equation*}
implies that $\varphi(A)<\pi$.

Assume that
\begin{equation*}
\sigma(A)\cap R_\pi=\emptyset.
\end{equation*}
Then $A+tI$ is invertible for all $t>0$.

First suppose that $N(A)=\left\{0\right\}$. 

If $\varphi(A)=\pi$, then we can find a sequence $(x_n)$ in $X$, with $\|x_n\|=1$, for all $n\in\mathbb{N}$, such that
\begin{equation*}
\lim_{n\rightarrow\infty}\frac{Re\langle Ax_n,x_n\rangle}{\|Ax_n\|}=-1.
\end{equation*}
Since $A$ has closed range there exists $c>0$ such that $\|Ax_n\|\geq c$. Hence, by passing to a subsequence if necessary, we get that
\begin{equation*}
\lim_{n\rightarrow \infty}Re\langle Ax_n, x_n\rangle=-t_0\text{ and }\lim_{n\rightarrow \infty}\|Ax_n\|=t_0,
\end{equation*}
for some $t_0>0$. Without loss of generality we may assume that $t_0=1$ (if not take $\frac{1}{t_0}A$ instead of $A$). We have that
\begin{equation*}
\frac{1}{2}\,|Re\langle Ax_n, x_n\rangle-1|\leq\|\frac{Ax_n-x_n}{2}\|\leq\frac{1}{2}\,(\|Ax_n\|+1),
\end{equation*}
for all $n\in \mathbb N$ and so $\displaystyle{\lim_{n\rightarrow \infty}}\|\frac{Ax_n-x_n}{2}\|=1$. Since $X$ is a Hilbert space it is uniformly convex and thus using \cite[Proposition 5.2.8 (d)]{Megginson} we get that $\displaystyle{\lim_{n\rightarrow \infty}}\|Ax_n+x_n\|=0$ which contradicts the invertibility of $A+I$. Hence $\varphi(A)<\pi$.

For the case where $N(A)\neq\left\{0\right\}$, following the same steps as above we may show that
\begin{equation*}
\varphi(A|_{N(A)^\perp})<\pi.
\end{equation*}
Hence, using Lemma \ref{angleangle}, we get that
\begin{equation*}
\varphi(A)=\varphi(A|_{N(A)^\perp})<\pi.
\end{equation*}
\end{proof}
%%%%%%%%%%%%%%%%%%%%%%%%%%%%%%%%%%%%%%%%%%%%%%%%%%%%%%%%%%%%%%%%%%%%%%%%%%%%

We can now prove that if $R(A)\perp N(A)$ and $\sigma(A)\cap R_\omega= \emptyset$, for some $\omega\in[0, 2\pi)$, then $X=R(A)\oplus N(A)$.
%%%%%%%%%%%%%%%%%%%%%%%%%%%%%%%%%%%%%%%%%%%%%%%%%%%%%%%%%%%%%%%%%%%%%%%%%%%%

\begin{theorem}
\label {ray}
Let $X$ be a Hilbert space and $A:X\rightarrow X$ be a bounded linear operator with closed range such that $R(A)\perp N(A)$. If
\begin{equation*}
\sigma(A)\cap R_\omega= \emptyset,
\end{equation*}
for some $\omega\in[0, 2\pi)$, then $X=R(A)\oplus N(A)$.
\end{theorem}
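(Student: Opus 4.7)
The plan is to reduce to Lemma \ref{spectrum} by a rotation. Given the ray $R_\omega$ on which $\sigma(A)$ has no points, I would choose the scalar $t = e^{i(\pi-\omega)}$ so that multiplication by $t$ rotates $R_\omega$ onto $R_\pi$. Since $\sigma(tA) = t\cdot\sigma(A)$, this gives $\sigma(tA)\cap R_\pi = \emptyset$.

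Next I would check that the operator $tA$ still satisfies all the hypotheses needed to invoke Lemma \ref{spectrum}: we have $R(tA) = R(A)$ and $N(tA) = N(A)$, so $tA$ has closed range and $R(tA)\perp N(tA)$. Applying the contrapositive of Lemma \ref{spectrum} to $tA$ then yields
\begin{equation*}
\varphi(tA) < \pi.
\end{equation*}

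Finally, I would invoke Theorem \ref{hilbert} on $tA$, which is a bounded linear operator on the Hilbert space $X$ with closed range and angle strictly less than $\pi$, to conclude
\begin{equation*}
X = R(tA)\oplus N(tA) = R(A)\oplus N(A).
\end{equation*}

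There is really no obstacle here; the work has all been done in the preceding lemma and theorem. The one point that deserves a brief remark is the translation of the hypothesis $\sigma(A)\cap R_\omega = \emptyset$ into the normalized form $\sigma(tA)\cap R_\pi = \emptyset$ via the spectral mapping for scalar multiplication, and the observation that the ranges, kernels, and the orthogonality relation $R(A)\perp N(A)$ are all invariant under multiplication by a nonzero scalar, so that the hypotheses of both Lemma \ref{spectrum} and Theorem \ref{hilbert} transfer to $tA$ without change.
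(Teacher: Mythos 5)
Your proof is correct and is essentially identical to the paper's own argument: the same rotation $t=e^{i(\pi-\omega)}$, the same appeal to (the contrapositive of) Lemma \ref{spectrum} to get $\varphi(tA)<\pi$, and the same application of Theorem \ref{hilbert}. The extra remarks about the invariance of $R(A)$, $N(A)$, and the orthogonality under scalar multiplication are implicit in the paper but worth stating.
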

%%%%%%%%%%%%%%%%%%%%%%%%%%%%%%%%%%%%%%%%%%%%%%%%%%%%%%%%%%%%%%%%%%%%%%%%%%%%

\begin{proof}
Since
\begin{equation*}
\sigma(A)\cap R_\omega= \emptyset,
\end{equation*}
for $ t=e^{i(\pi-\omega)}$ we have that $\sigma(tA)\cap R_\pi=\emptyset$. Hence, by Lemma \ref{spectrum}, we get that $\varphi(tA)<\pi$, which, by Theorem \ref{hilbert}, implies that
\begin{equation*}
X=R(tA)\oplus N(tA)=R(A)\oplus N(A).
\end{equation*}
\end{proof}
%%%%%%%%%%%%%%%%%%%%%%%%%%%%%%%%%%%%%%%%%%%%%%%%%%%%%%%%%%%%%%%%%%%%%%%%%%%%
 
Note that in the first part of the proof of Lemma \ref{spectrum}, the crucial property of the Hilbert space $X$ was its uniform convexity. Hence this part may be adapted in the case of a uniformly convex Banach space. Adding this hypothesis, we may now prove an interesting property of operators with closed range, whose spectrum does not intersect all rays emanating from the origin: they are surjective if and only if they are injective.
%%%%%%%%%%%%%%%%%%%%%%%%%%%%%%%%%%%%%%%%%%%%%%%%%%%%%%%%%%%%%%%%%%%%%%%%%%%%

\begin{theorem}
Assume that both $X$ and $X^\ast$ are uniformly convex Banach spaces and $A:X\rightarrow X$ is a bounded linear operator with closed range such that
\begin{equation*}
\sigma(A)\cap R_\omega= \emptyset,
\end{equation*}
for some $\omega\in[0, 2\pi)$. Then $A$ is surjective if and only if it is injective.
\end{theorem}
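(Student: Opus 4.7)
My plan is to prove the theorem in three stages. First, I would reduce to the case $\omega = \pi$ by the same rotation trick as in Theorem \ref{ray}: with $t = e^{i(\pi - \omega)}$ we have $\sigma(tA) \cap R_\pi = \emptyset$, and since $N(tA) = N(A)$, $R(tA) = R(A)$, it suffices to prove the equivalence for $tA$. The important consequence of this normalization is that $-s \notin \sigma(A)$ for every $s > 0$, so $A + sI$ is invertible; in particular $A + I$ is invertible.

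Second, I would prove the implication ``$A$ injective $\Rightarrow$ $A$ surjective'' by adapting the $N(A)=\{0\}$ part of Lemma \ref{spectrum} to this Banach setting. Since $X^\ast$ is uniformly convex (hence strictly convex), the duality map $J$ is single-valued and the semi-inner product satisfies $[y,x] = \langle J(x), y\rangle$. Suppose $\phi(A) = \pi$; then there exist $x_n$ with $\|x_n\|=1$ and $Re[Ax_n,x_n]/\|Ax_n\| \to -1$. Closedness of $R(A)$ and injectivity give $\|Ax_n\| \geq c > 0$, so after passing to a subsequence and rescaling $A$ we may assume $\|Ax_n\| \to 1$ and $Re[Ax_n,x_n] \to -1$. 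Using $[x_n,x_n]=1$ and the Cauchy--Schwarz-type bound for the semi-inner product,
\[
\tfrac{1}{2}\bigl|Re[Ax_n,x_n] - 1\bigr| = \tfrac{1}{2}\bigl|Re\langle J(x_n), Ax_n - x_n\rangle\bigr| \leq \tfrac{1}{2}\|Ax_n - x_n\|,
\]
which together with the trivial upper bound $\|(Ax_n - x_n)/2\| \leq (\|Ax_n\|+1)/2$ forces $\|(Ax_n - x_n)/2\| \to 1$. Applying \cite[Proposition 5.2.8 (d)]{Megginson} to the pair $-x_n$ and $Ax_n$ (both of norm tending to $1$, with midpoint norm tending to $1$), uniform convexity of $X$ yields $\|Ax_n + x_n\| \to 0$, contradicting the invertibility of $A+I$. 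Hence $\phi(A) < \pi$; since $N(A)=\{0\}$ makes $R(A)+N(A)=R(A)$ closed, Theorem \ref{theorem} gives $R(A) = X$.

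Third, I would handle ``$A$ surjective $\Rightarrow$ $A$ injective'' by duality. Since $X$ is uniformly convex it is reflexive, so $X^{\ast\ast} = X$ is uniformly convex, and $X^\ast$ is uniformly convex by hypothesis; thus the hypotheses of the theorem hold for $A^\ast$ viewed as an operator on $X^\ast$ (with bidual $X$). By the closed range theorem $R(A^\ast)$ is closed, and $\sigma(A^\ast) = \sigma(A)$ so $\sigma(A^\ast) \cap R_\omega = \emptyset$. If $A$ is surjective then $A^\ast$ is injective; applying the implication proved in stage two to $A^\ast$, we conclude $A^\ast$ is surjective, which forces $A$ to be injective.

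The main obstacle is the Banach-space adaptation in stage two: the Hilbert-space proof of Lemma \ref{spectrum} used the inner-product identity for $\|Ax_n - x_n\|^2$, and one must replace this by the semi-inner product estimate above, relying on single-valuedness of $J$ (from uniform convexity of $X^\ast$) and on the modulus of convexity of $X$ itself to drive the final contradiction. The two-sided uniform convexity hypothesis on $X$ and $X^\ast$ is used exactly here, and the duality step in stage three exploits the symmetry this creates between $A$ and $A^\ast$.
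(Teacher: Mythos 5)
Your proposal is correct and follows essentially the same route as the paper: rotate so that $\sigma(A)\cap R_\pi=\emptyset$, adapt the injective case of Lemma \ref{spectrum} to the uniformly convex Banach setting to get $\phi(tA)<\pi$ and hence surjectivity via Theorem \ref{theorem}, and then dualize for the converse using $\sigma(A^\ast)=\sigma(A)$. The paper's proof is terse at exactly the point you elaborate (replacing the Hilbert-space norm identity by the semi-inner-product Cauchy--Schwarz estimate $|Re[Ax_n,x_n]-1|\leq\|Ax_n-x_n\|$ together with uniform convexity), and your filled-in details are accurate.
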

%%%%%%%%%%%%%%%%%%%%%%%%%%%%%%%%%%%%%%%%%%%%%%%%%%%%%%%%%%%%%%%%%%%%%%%%%%%%

\begin{proof}
Assume that $A$ is injective. Using the same arguments as those in the first part of the proof of Lemma \ref{spectrum} and arguing as above we get that $\varphi(tA)<\pi$, for some $0\neq t\in\mathbb C$. So by Theorem \ref{theorem} we get that $X=R(A)$.

Conversely if $A$ is surjective, then $A^\ast$ is injective and, since $\sigma(A^\ast)=\sigma(A)$, we get, again as above, that $A^\ast$ is surjective and so $A$ is injective.
\end{proof}
%%%%%%%%%%%%%%%%%%%%%%%%%%%%%%%%%%%%%%%%%%%%%%%%%%%%%%%%%%%%%%%%%%%%%%%%%%%%

\bibliographystyle{amsplain}

%%%%%%%%%%%%%%%%%%%%%%%%%%%%%%%%%%%%%%%%%%%%%%%%%%%%%%%%%%%%%%%%%%%%%%%%%%%%

\end{document}